\newcommand\numberthis{\addtocounter{equation}{1}\tag{\theequation}}
\newtheorem{remark}{Remark}
\newtheorem{assumption}{Assumption}
\newtheorem{lemma}{Lemma}
\newtheorem{theorem}{Theorem}
\begin{document}
\title{Smooth Dynamics for Distributed Constrained Optimization with Heterogeneous Delays}
\author{Mengmou~Li, Shunya~Yamashita, Takeshi~Hatanaka, and Graziano~Chesi
\thanks{M.~Li and G.~Chesi are with the Department
of Electrical and Electronic Engineering, The University of Hong Kong, Hong Kong, China (e-mail: mengmou\_li@hku.hk; chesi@eee.hku.hk).}
\thanks{S.~Yamashita is with School of Engineering, Tokyo Institute of Technology, 2-12-1 S5-16,
Ookayama, Meguro-ku, Tokyo, Japan (e-mail:yamashita.s.ag@hfg.sc.e.titech.ac.jp).}
\thanks{T.~Hatanaka is with Graduate School of Engineering, Osaka University, 2-1 Yamada-oka, Suita, Osaka, Japan (e-mail:
hatanaka@eei.eng.osaka-u.ac.jp).}
}

\date{}
\maketitle
\begin{abstract}
This work investigates the distributed constrained optimization problem under inter-agent communication delays from the perspective of passivity. 
First, we propose a continuous-time algorithm for distributed constrained optimization with general convex objective functions. The asymptotic stability under general convexity is guaranteed by the phase lead compensation. The inequality constraints are handled by adopting a projection-free generalized Lagrangian, whose primal-dual gradient dynamics preserves passivity and smoothness, enabling the application of the LaSalle's invariance principle in the presence of delays. Then, we incorporate the scattering transformation into the proposed algorithm to enhance the robustness against unknown and heterogeneous communication delays. Finally, a numerical example of a matching problem is provided to illustrate the results.
\end{abstract}
\section{Introduction}
\IEEEPARstart{D}{istributed} convex optimization over multi-agent systems aims to drive agents to cooperatively optimize the sum of local objective functions that are only accessible to their local agents.
Ever since the pioneer work \cite{wang2011control} that provides a control-theoretic perspective for the proportional-integral (PI) consensus-based distributed algorithms, many works have been carried out in the continuous-time scheme \cite{yang2019survey}.
Recently, some problems in distributed optimization have been analyzed via passivity-based techniques\cite{hatanaka2018passivity,miyano2020continuous,yamashita2020passivity,li2019input,su2019distributed}.
Passivity-based techniques usually enjoy good scalability to large-scale networks owing to the preservation of passivity in parallel or negative feedback interconnection of passive components \cite{hatanaka2015passivity}. 

Distributed optimization in the presence of communication delays has been widely studied in recent years \cite{yang2016distributed,hatanaka2018passivity,miyano2020continuous}.
The work \cite{yang2016distributed} addresses time-varying delays, but it only considers an identical delay known in advance for all communication channels and does not treat inequality constraints, which simplifies convergence analysis.
The problem under unknown and heterogeneous communication delays is addressed via passivity techniques in \cite{hatanaka2018passivity,miyano2020continuous}. However, to ensure optimality in the presence of inequality constraints, delays are assumed to be homogeneous and an additional assumption on the graph is needed in \cite{hatanaka2018passivity}, which is not always easy to verify in large scale networks.
Besides, the objective function is assumed to be strictly convex in \cite{hatanaka2018passivity,miyano2020continuous}, which does not hold for a large class of convex optimization problems.

The LaSalle's invariance principle is widely used for convergence analysis of distributed algorithms.
Algorithms derived from the classical Lagrange multiplier method usually adopt projected operations to guarantee the non-negativeness of the multipliers for inequality constraints. As a result, it leads to non-smooth dynamics, which is analyzed by the invariance principle for Carath{\'e}odory systems \cite{cherukuri2016asymptotic}. 
However, the discontinuous nature hinders the application of the invariance principle when delays are introduced into the systems, which results in the additional restrictive assumptions in \cite{hatanaka2018passivity,miyano2020continuous}. It is worth noting that a projection-free Lagrangian is adopted to solve local and couple inequalities in \cite{li2019generalized}, which enables a smooth dynamics and the application of the LaSalle's invariance principle under delays.
Another important issue is that the primal-dual gradient dynamics may cause oscillations when the objective function lacks strict convexity \cite{holding2017emergence}. To cope with this problem, some modification methods are introduced \cite{holding2017emergence,feijer2010stability}. However, these methods are either restricted to affine constraints or not in a distributed structure.
Recently, a phase lead compensation technique is adopted as a generalized method to ensure convergence \cite{yamashita2020passivity}.

In this work, we address unknown and heterogeneous inter-agent communication delays in distributed constrained optimization without the strictly convex assumption by combining techniques used in \cite{li2019generalized,yamashita2020passivity} from the perspective of passivity.
First, we propose a smooth continuous-time algorithm for distributed constrained optimization with general convex objective functions without delays. Then, we incorporate the scattering transformation into the proposed algorithm to enhance the robustness against unknown and heterogeneous communication delays.

\section{Preliminaries}\label{Preliminaries}
Notations: Let $\mathbb{R}$ ($\mathbb{R}_{\geq 0}$) be the set of (non-negative) real numbers. $\textrm{col}(v_1,\ldots, v_m) := (v_1^T, \ldots, v_m^T)^T$ denotes the column vector stacked with vectors $v_1, \ldots, v_m$. $I_{n}$ denotes the $n \times n$ identity matrix, $\mathbf{1}_n := \textrm{col}(1,\ldots,1) \in \mathbb{R}^n$, and $\mathbf{0}$ denotes the zero matrix of proper dimension. The notation ``$\circ$'' denotes the Hadamard product and ``$\otimes$'' denotes the Kronecker product.
$\nabla_{k}f$ denotes the gradient of $f$ along the variable $k$, whose subscript can be omitted if there is only one variable.

We first introduce some knowledge of convex analysis.
A differentiable function $f: \mathbb{R}^{m} \rightarrow \mathbb{R}$ is \textit{convex} over a convex set $\mathcal{X} \subset \mathbb{R}^{m}$ iff $\left[\nabla f(x)-\nabla f(y)\right]^{T}(x-y)\geq 0$, $\forall x,~y \in \mathcal{X}$, and is \textit{strictly convex} iff the strict inequality holds for any $x \neq y$. The function $f$ is said to be \textit{concave} if $-f$ is convex.
For a function $\mathcal{L}: \mathcal{X} \times \mathcal{Y} \rightarrow \mathbb{R}$ with $\mathcal{X} \subset \mathbb{R}^{n}$, $\mathcal{Y} \subset \mathbb{R}^{m}$ being closed and convex,
$(x^*,y^*) \in \mathcal{X} \times \mathcal{Y}$ is called a \textit{saddle point} of $\mathcal{L}$ if $\mathcal{L}(x^*,y) \leq \mathcal{L}(x^*,y^*)\leq \mathcal{L}(x,y^*)$, $\forall (x,y) \in \mathcal{X} \times \mathcal{Y}$.

Next, let us present some basic concepts in graph theory.
An \textit{undirected} communication graph is represented by $\mathcal{G} = (\mathcal{N},\mathcal{E})$, where $\mathcal{N} = \{1,\ldots,N\}$ is the node set of all agents, $\mathcal{E}\subset \mathcal{N}\times\mathcal{N}$ is the edge set. The edge $(i,j) \in \mathcal{E}$ means that agent $i$ and $j$ can exchange information.
The adjacency matrix $\mathcal{A} := [a_{ij}]$ satisfies $a_{ii} = 0$, and $a_{ij} = a_{ji}> 0$ if $(i,j) \in \mathcal{E}$ and $a_{ij} = 0$, otherwise. The graph $\mathcal{G}$ is said to be \textit{connected} if there exists a sequence of successive edges between any two agents.
When $\mathcal{G}$ is connected and undirected, its corresponding Laplacian matrix $L := \textrm{diag}\{\mathcal{A} \cdot \mathbf{1}_{N}\}- \mathcal{A}$ is positive semidefinite and has zero as its simple eigenvalue associated with eigenvector $v = \alpha \mathbf{1}_{N}$, $\forall \alpha \in \mathbb{R}$.

We conclude this section by giving the definition of passivity \cite{yamashita2020passivity}.
Consider a system $\Sigma$ described by a state model with state $x \in \mathbb{R}^{m}$, input $u \in \mathbb{R}^{n}$ and output $y \in \mathbb{R}^{n}$. The system $\Sigma$ is said to be \textit{passive} if there exists a positive semidefinite differentiable function $S(x): \mathbb{R}^{m} \rightarrow \mathbb{R}_{\geq 0}$ called \textit{storage function}, such that 
$
\dot{S}(x) \leq y^T u
$
holds for all inputs $u(t)$, all initial states $x(0)$, and all $t \geq 0$.

\section{Passivity-Based Algorithm on Constrained Distributed Optimization}
Let us consider a constrained distributed optimization problem in a network of $N$ agents in the node set $\mathcal{N} = \{1,\ldots,N\}$
\begin{equation}\label{distributed problem}
\begin{aligned}
& \min_{z} \sum_{i \in \mathcal{N}} f_i (z),~\text{s.t.}~ g_i(z) \leq 0, ~ h_i(z) = 0, i \in \mathcal{N}
\end{aligned}
\end{equation}
where $z \in \mathbb{R}^{n}$ is a decision variable, $f_i : \mathbb{R}^{n} \rightarrow \mathbb{R}$, $g_i : \mathbb{R}^{n} \rightarrow \mathbb{R}$, $h_i : \mathbb{R}^{n} \rightarrow \mathbb{R}$ are local objective function, inequality constraint and affine equality constraint for the $i$th agent, respectively.
Just for simplicity, we only consider one local inequality and equality constraint for each agent, while it is trivial to extend subsequent results to the case with multiple local constraints.
Next, we adopt the following assumptions.
\begin{assumption}\label{Assumption of optimization problem}
The functions $f_i$ and $g_i$ are convex and twice differentiable. The Slater's condition holds and there exists a finite optimal solution to problem \eqref{distributed problem}.
\end{assumption}
This assumption ensures that the problem is well-defined.
$f_i$ is only required to be convex, implying that there may exist more than one optimal solution to problem \eqref{distributed problem}.

\begin{assumption}\label{Assumpton of graph}
The communication graph $\mathcal{G}$ is undirected and connected.
\end{assumption}

Denote $x = \textrm{col} \left( x_1,\ldots,x_N \right)$, where $x_i \in \mathbb{R}^{n}$, then problem \eqref{distributed problem} is equivalent to
\begin{equation}\label{distributed problem equivalent}
\begin{aligned}
& \min_{x} f(x) : = \sum_{i \in \mathcal{N}} f_i (x_i) + x^T \mathbf{L} x\\
&\text{s.t.} ~g_i(x_i) \leq 0, ~ h_i(x_i) = 0,~ i \in \mathcal{N}, ~\mathbf{L} x = \mathbf{0}
\end{aligned}
\end{equation}
where $\mathbf{L}= L \otimes I_{n}$, and $L$ is the Laplacian matrix.

\subsection{Generalized Lagrange Multiplier Method}
In this subsection, we briefly review the generalized Lagrange multiplier method (GLMM) in \cite{li2019generalized} for solving problem \eqref{distributed problem equivalent}.
Define compact variables
$
\lambda = \textrm{col}(\lambda_1,\ldots,\lambda_N)
$, $\mu = \textrm{col}(\mu_1,\ldots,\mu_N)$, $\xi = \textrm{col}(\xi_1,\ldots,\xi_N)$ with $\lambda_i, \mu_i \in \mathbb{R}$ and $\xi_i \in \mathbb{R}^{n}$, $i\in\mathcal{N}$. Adopt a Lagrangian for problem \eqref{distributed problem equivalent},
\[
\begin{array}{rl}
& \mathcal{L}(x, \xi, \lambda, \mu) := \\
& f(x) + \displaystyle\sum_{i \in \mathcal{N}} \lambda_i^2 g_i(x_i) + \displaystyle\sum_{i \in \mathcal{N}} \mu_i h_i(x_i) - \xi^T \mathbf{L} x + \frac{1}{2} x^T \mathbf{L} x \numberthis \label{generalized Lagrangian}
\end{array}
\]
where $\mathcal{L}(x, \xi, \lambda, \mu)$ is a class of the generalized Lagrangian; $\lambda_i^2$ is the generalized multiplier for the inequality constraint $g_i \leq 0$;
$\mu_i$ is the multiplier for the equality constraint $h_i = 0$; $\xi$ is the multiplier for the consensus constraint $\mathbf{L}x = \mathbf{0}$.
Then, by applying the primal-dual gradient flow to $\mathcal{L}(x, \xi, \lambda, \mu)$, we obtain the following projection-free distributed algorithm
\begin{subequations}\label{eq: distributed algorithm without phase compensation}
\begin{align}
\dot{x}_i =& -\nabla_{x_i} \mathcal{L} = -\nabla f_i(x_i) - \lambda_i^2 \nabla g_i(x_i) - \mu_i \nabla h_i(x_i) \nonumber\\
& + \sum_{j \in \mathcal{N}_i} a_{ij} (x_j - x_i) - \sum_{j \in \mathcal{N}_i} a_{ij} (\xi_j - \xi_i) \label{eq: gradient of x}\\
\dot{\xi}_i =& \nabla_{\xi_i} \mathcal{L},~ \dot{\lambda}_i = \nabla_{\lambda_i} \mathcal{L},~\dot{\mu}_i = \nabla_{\mu_i} \mathcal{L}
\end{align}
\end{subequations}
where $a_{ij}$ is the $(i,j)$-th entry of the adjacency matrix.
The above algorithm is said to be projection-free since the non-negativeness of multipliers $\lambda_i^2$ is already guaranteed without any projection operator.

Let $(x^*, \xi^*, \lambda^*, \mu^*) \in \mathcal{H}^*$ denotes an optimal solution of the problem where $\mathcal{H}^*$ is the set satisfying the generalized KKT condition for problem \eqref{distributed problem equivalent} corresponding to $\mathcal{L}(x, \xi, \lambda, \mu)$, i.e.,
\begin{subequations}\label{KKT conditions}
\begin{align}
& \mathbf{L} x^* =\mathbf{0},\label{kkt Lx}\\
& h_i(x_i^*) = 0, \quad g_i(x_i^*) \leq 0, \quad {\lambda_i^*}^2 g_i(x_i^*) = 0, \label{kkt lambda} \\
& \nabla f_i(x_i^*) + {\lambda_i^*}^2 \nabla g_i(x_i^*) + \mu_i^* \nabla h_i(x_i^*) + \sum_{j=1}^{N} a_{ij} (\xi_j^* - \xi_i^*) = \mathbf{0}\label{kkt gradient of x}
\end{align}
\end{subequations}
where the term $\sum_{j \in \mathcal{N}_i} a_{ij} (x_j^* - x_i^*)$ in \eqref{kkt gradient of x} is omitted since \eqref{kkt Lx} implies $x_i^* = x_j^*$, $\forall i,j$. Next, let us give the following lemma derived from \cite{li2019generalized}.
\begin{lemma}
Under \Cref{Assumption of optimization problem}, a fixed point $(x^*, \xi^*, \lambda^*, \mu^*)$ solves problem \eqref{distributed problem equivalent} if and only if it satisfies condition \eqref{KKT conditions}.
\end{lemma}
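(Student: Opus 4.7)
The plan is to verify both directions of the equivalence by unpacking what the fixed-point equations of \eqref{eq: distributed algorithm without phase compensation} force and then matching the resulting list against \eqref{KKT conditions} using standard convex KKT theory.

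First, I would spell out the fixed-point conditions. Setting $\dot{\xi}_i = 0$ in $\dot{\xi}_i = \nabla_{\xi_i}\mathcal{L}$ yields $\mathbf{L}x^* = \mathbf{0}$, which is \eqref{kkt Lx}; similarly $\dot{\mu}_i = 0$ gives $h_i(x_i^*) = 0$, and $\dot{\lambda}_i = 0$ gives $2\lambda_i^* g_i(x_i^*) = 0$, which implies the complementarity ${\lambda_i^*}^2 g_i(x_i^*) = 0$ in \eqref{kkt lambda}. Setting $\dot{x}_i = 0$ in \eqref{eq: gradient of x} and using $\mathbf{L}x^* = \mathbf{0}$ (established from $\dot{\xi}=0$) to cancel the $\sum_j a_{ij}(x_j^*-x_i^*)$ term recovers exactly the gradient identity \eqref{kkt gradient of x}. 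The only component of \eqref{KKT conditions} not generated by the dynamics themselves is primal feasibility of the inequality, namely $g_i(x_i^*) \leq 0$.

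Next, for the forward implication, I would note that if the fixed point \emph{solves} \eqref{distributed problem equivalent}, then it must in particular be feasible, so $g_i(x_i^*) \leq 0$ holds automatically and, together with the items derived above, this completes \eqref{KKT conditions}. For the converse, I would invoke the classical sufficiency of KKT for convex programs: under \Cref{Assumption of optimization problem}, $f$ and each $g_i$ are convex, $h_i$ is affine, and Slater's condition holds, so \eqref{distributed problem equivalent} is a convex program with strong duality. Viewing $\tilde\lambda_i := {\lambda_i^*}^2 \geq 0$ as the standard Lagrange multiplier for $g_i(x_i) \leq 0$, conditions \eqref{kkt Lx}--\eqref{kkt gradient of x} coincide with the standard KKT system of \eqref{distributed problem equivalent}, and are therefore sufficient for $x^*$ to be a global minimizer.

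The main obstacle is essentially bookkeeping: one has to be careful that the reparametrization $\tilde\lambda_i = \lambda_i^{*2}$ does not lose information in either direction. Surjectivity of $\lambda_i \mapsto \lambda_i^{*2}$ onto $\mathbb{R}_{\geq 0}$ ensures every nonnegative standard multiplier can be represented in the generalized form, so the generalized KKT system \eqref{KKT conditions} and the standard one are genuinely equivalent. Once this correspondence is noted, both implications reduce to the two routine verifications above, and the argument closely follows \cite{li2019generalized}.
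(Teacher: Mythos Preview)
The paper does not actually supply a proof of this lemma; it simply states the result and attributes it to \cite{li2019generalized}. Your argument is correct and is precisely the standard route: the equilibrium equations of \eqref{eq: distributed algorithm without phase compensation} reproduce \eqref{kkt Lx}, \eqref{kkt gradient of x}, and the complementarity and equality parts of \eqref{kkt lambda}, leaving only $g_i(x_i^*)\le 0$, which is supplied by feasibility in the forward direction; for the converse, the substitution $\tilde\lambda_i={\lambda_i^*}^2\ge 0$ turns \eqref{KKT conditions} into the ordinary KKT system of the convex program \eqref{distributed problem equivalent}, and Slater's condition plus convexity make KKT sufficient for optimality. Your remark about surjectivity of $\lambda\mapsto\lambda^2$ onto $\mathbb{R}_{\ge 0}$ is the right observation to ensure nothing is lost in the reparametrization, and this is exactly the reasoning in \cite{li2019generalized}.
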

Then, denote $z^* = x_i^*$, $\forall i$. Obviously, $z^*$ is the optimal solution to problem \eqref{distributed problem}.

\subsection{GLMM With Phase Lead Compensation}
A restriction for the convergence of algorithm \eqref{eq: distributed algorithm without phase compensation} is $f_i$ being strictly convex \cite{li2019generalized}.
When $f_i$ lacks strict convexity, an extra modification is needed. In this subsection, we add the phase lead compensator into the dynamics \eqref{eq: distributed algorithm without phase compensation}, which serves to provide stable zeros and avoid possible oscillations for the algorithm under general convexity \cite{yamashita2020passivity}.

Define $\nu = \textrm{col} (\nu_1, \ldots, \nu_N)$ with $\nu_i \in \mathbb{R}^{n}$ and 
\[
\begin{array}{rl}
\nu_i = & - \nabla f_i(x_i) - \lambda_i^2 \nabla g_i(x_i) - \mu_i \nabla h_i(x_i)\\
& + \sum_{j \in \mathcal{N}_i} a_{ij} (x_j - x_i) - \sum_{j \in \mathcal{N}_i} a_{ij} (\xi_j - \xi_i). \numberthis \label{dynamics of nu}
\end{array}
\]
We add the phase lead compensator to the integrator in the primal gradient dynamics \eqref{eq: gradient of x}, then the dynamics for the $i$th agent is reformulated in the frequency domain as 
\begin{equation} \label{algorithm compensator}
x_i(s) = \left( M_i(s) \cdot I_{n} \right) \nu_i(s)
\end{equation}
where the generalized transfer function $M_i(s)$ is defined by
\begin{equation}\label{compensator matrix}
\begin{aligned}
&M_i(s) = \sum_{k = 1}^{m} \frac{c^i_{k}}{s + b^i_{k}},\\
& b^i_m > \ldots > b^i_2 > b^i_1 = 0, ~ c^i_k > 0, ~ k = 1,\ldots, m \geq 2.
\end{aligned}
\end{equation}
Note that we only apply the phase lead compensator to \eqref{eq: gradient of x}, and \eqref{compensator matrix} is a simplified version of the algorithm in \cite{yamashita2020passivity}.

Then, the overall distributed algorithm becomes
\begin{subequations}\label{eq: overall distributed algorithm}
\begin{align}
\dot{\rho}_k^{i} =& - b_{k}^{i} \rho_{k}^{i} + c_{k}^{i} \nu_i ,~k=1, \ldots, m \label{eq: time domain of M_i 1}\\
x_i = & \sum_{k=1}^{m} \rho_{k}^{i} \label{eq: time domain of M_i 3}\\
\nu_i = & - \nabla f_i(x_i) - \lambda_i^2 \nabla g_i(x_i) - \mu_i \nabla h_i(x_i)\nonumber\\
& + \sum_{j \in \mathcal{N}_i} a_{ij} (x_j - x_i) - \sum_{j \in \mathcal{N}_i} a_{ij} (\xi_j - \xi_i)
\label{eq: dynamics of nu}\\
\dot{\xi}_i = & \sum_{j \in \mathcal{N}_i} a_{ij} (x_j - x_i)\label{eq: dynamics of xi}\\
\dot{\lambda}_i = & 2 \lambda_i g_i(x_i),~ \dot{\mu}_i = h_i(x_i) \label{eq: dynamics of lambda and mu}
\end{align}
\end{subequations}
where \eqref{eq: time domain of M_i 1}, \eqref{eq: time domain of M_i 3} is the state-space representation of \eqref{algorithm compensator} and $\rho^i_k \in \mathbb{R}^{n}$ ($k = 1, \ldots, m$) is an auxiliary state variable.
Under the phase lead compensation, the block diagram of the $i$th agent's dynamics for $x_i$, $\lambda_i$, $\nu_i$ can be described by \Cref{fig: block diagram of phase lead compensator}.
We can observe that, algorithm \eqref{eq: overall distributed algorithm} is reduced to algorithm \eqref{eq: distributed algorithm without phase compensation} if $M_i(s)$ is replaced by an integrator.

\begin{figure}
\centering
\includegraphics[width = 1\linewidth]{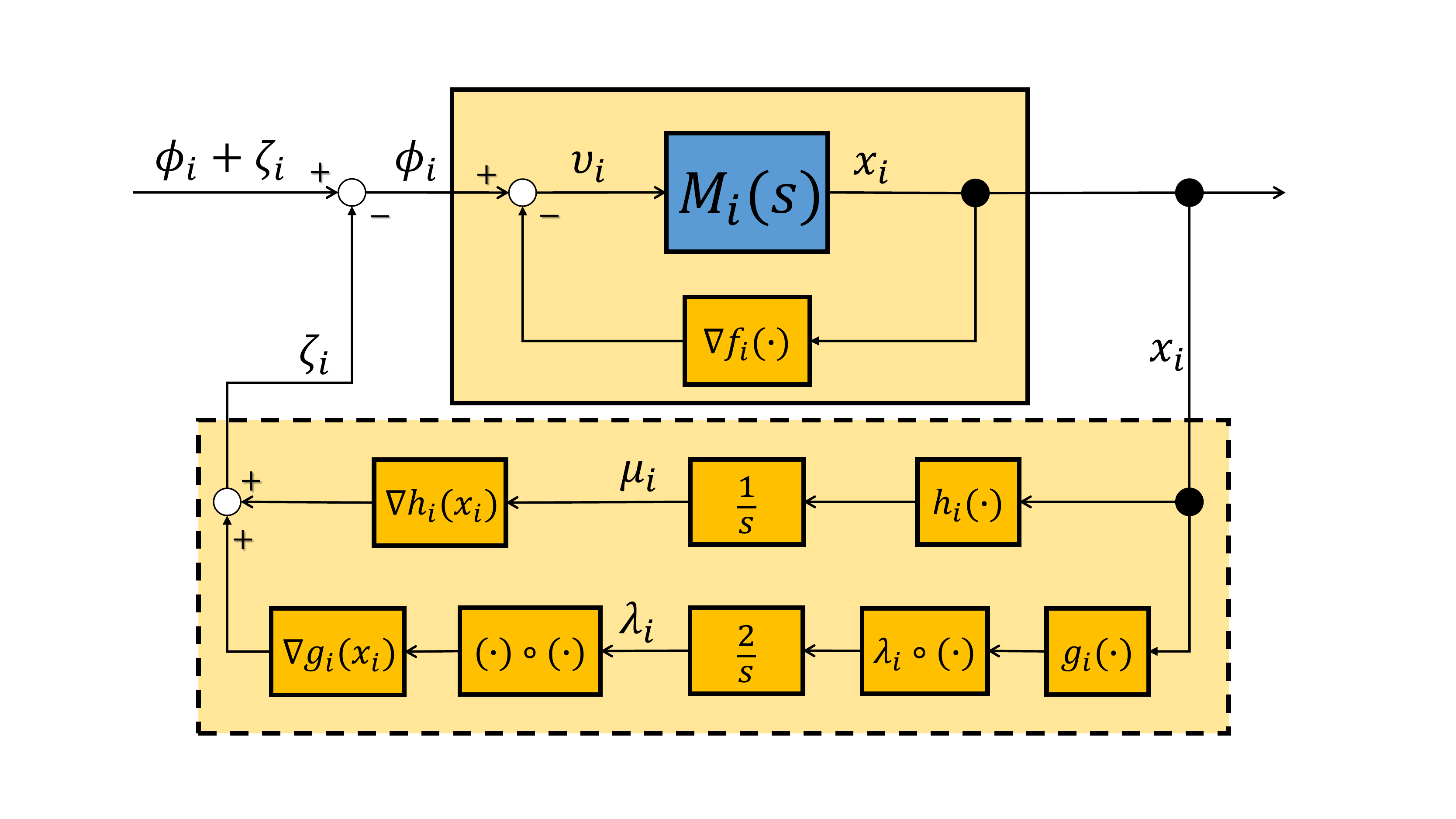}
\caption{Block diagram of the $i$th agent's dynamics for $x_i$, $\lambda_i$, $\nu_i$.
The generalized transfer function $M_i(s)$ represents the cascade connection of phase lead compensator with the integrator for gradient dynamics of $x_i$ to ensure convergence under general convexity.}
\label{fig: block diagram of phase lead compensator}
\end{figure}
\subsection{Convergence Analysis}
We aim to address convergence of system \eqref{eq: overall distributed algorithm} via passivity analysis in this subsection. To this end, let us first analyze the passivity of the subsystems included in it. 
Denote a fixed point $(x^*, \xi^*, \lambda^*, \mu^*) \in \mathcal{H}^*$ as the reference point, $\nu = \textrm{col}\left( \nu_1,\ldots,\nu_N \right)$ and $\nu^* = \nu(x^*, \xi^*, \lambda^*, \mu^*) = \mathbf{0}$. First, we focus on subsystem \eqref{eq: time domain of M_i 1}-\eqref{eq: dynamics of nu} and obtain the following lemma whose proof can be found in \cite[Lemma 7]{yamashita2020passivity}. 
\begin{lemma}[\cite{yamashita2020passivity}]\label{Lemma passivity of compensator + primal dynamics}
Under Assumption \ref{Assumption of optimization problem}, the system defined by \eqref{eq: time domain of M_i 1}-\eqref{eq: dynamics of nu} is passive from $\phi_i - \phi_i^*$ to $x_i - z^*$ with respect to the storage function $S^{c}_{i} = \frac{1}{2c_1^i} \|\rho_1^i - z^* \|^2 + \sum_{k = 2}^{m} \frac{1}{2c_k^i} \| \rho_k^i \|^2$, where $\phi_i := \nu_i + \nabla f_i(x_i)$ and $\phi_i^* := \nu_i^* + \nabla f_i(x_i^*)$.
\end{lemma}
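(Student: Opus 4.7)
The plan is to verify the passivity inequality $\dot S_i^c \le (x_i-z^*)^T(\phi_i-\phi_i^*)$ by a direct differentiation of $S_i^c$ along the trajectories of \eqref{eq: time domain of M_i 1}--\eqref{eq: dynamics of nu}, and then to use the convexity of $f_i$ together with the positivity of the compensator coefficients to absorb the cross terms.

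First I would compute
\[
\dot S_i^c = \frac{1}{c_1^i}(\rho_1^i-z^*)^T\dot\rho_1^i + \sum_{k=2}^{m}\frac{1}{c_k^i}(\rho_k^i)^T\dot\rho_k^i,
\]
substitute $\dot\rho_k^i=-b_k^i\rho_k^i+c_k^i\nu_i$, and exploit the fact that $b_1^i=0$ so that the $k=1$ contribution reduces to $(\rho_1^i-z^*)^T\nu_i$. Collecting terms and using $x_i=\sum_{k=1}^{m}\rho_k^i$ gives
\[
\dot S_i^c = (x_i-z^*)^T\nu_i - \sum_{k=2}^{m}\frac{b_k^i}{c_k^i}\|\rho_k^i\|^2.
\]
Next I would replace $\nu_i$ by $\phi_i-\nabla f_i(x_i)$ and add and subtract $\phi_i^*=\nabla f_i(z^*)$ (noting that $\nu_i^*=0$ at the reference and $x_i^*=z^*$ because \eqref{kkt Lx} holds), producing
\[
\dot S_i^c = (x_i-z^*)^T(\phi_i-\phi_i^*) - (x_i-z^*)^T\bigl(\nabla f_i(x_i)-\nabla f_i(z^*)\bigr) - \sum_{k=2}^{m}\frac{b_k^i}{c_k^i}\|\rho_k^i\|^2.
\]

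To finish, I would invoke convexity of $f_i$ from Assumption~\ref{Assumption of optimization problem}, which gives $(x_i-z^*)^T(\nabla f_i(x_i)-\nabla f_i(z^*))\ge 0$, and the fact that $b_k^i,c_k^i>0$ for $k\ge 2$ (per \eqref{compensator matrix}), which makes the last sum non-negative. Both terms therefore enter $\dot S_i^c$ with the correct sign and can be dropped to yield $\dot S_i^c\le (x_i-z^*)^T(\phi_i-\phi_i^*)$, which is precisely the passivity inequality. Positive semidefiniteness and differentiability of $S_i^c$ are immediate from its quadratic form and $c_k^i>0$.

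There is no real obstacle here; the only delicate point is the algebraic bookkeeping that identifies $(x_i-z^*)^T\nu_i$ after using $b_1^i=0$ together with the output equation $x_i=\sum_k\rho_k^i$. Everything else is a direct consequence of convexity and the sign conditions on the compensator parameters, which is why the result essentially transcribes \cite[Lemma~7]{yamashita2020passivity} to the present notation.
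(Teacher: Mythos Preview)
Your proposal is correct and follows essentially the same route as the paper: the paper itself does not give a full proof but records exactly the identity you derive, $\dot S_i^c=(x_i-z^*)^T\nu_i-\sum_{k=2}^m\tfrac{b_k^i}{c_k^i}\|\rho_k^i\|^2$, and then cites \cite[Lemma~7]{yamashita2020passivity} for the remaining passivity inequality obtained via convexity of $f_i$. Your bookkeeping using $b_1^i=0$ and $x_i=\sum_k\rho_k^i$ is precisely the intended argument.
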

Briefly note that it satisfies that
\[
\dot{S}^{c}_{i} = (x_i - z^*)^T \nu_i - \sum_{k = 2}^{m} \frac{b_k^i}{c_k^i} {\rho_k^i}^T \rho_k^i \leq (x_i - z^*)^T (\phi_i - \phi_i^*). \numberthis \label{eq: time derivative of Sc}
\]

Next, we show that the dual gradient of Lagrangian \eqref{generalized Lagrangian} with respect to $\lambda$, $\mu$ preserves passivity.
\begin{lemma}\label{Lemma passivity of dual dynamics}
Under \Cref{Assumption of optimization problem}, the system given by \eqref{eq: dynamics of lambda and mu} is passive from $x_i - z^*$ to $\zeta_i - \zeta_i^*$ with respect to the storage function $S^{g}_i = \frac{1}{4} \left( \lambda_i^2 - {\lambda_i^*}^2 \right) - \frac{1}{2} {\lambda_i^*}^2 \left(\ln{\lambda_i} - \ln{\lambda_i^*} \right) + \frac{1}{2}\left(\mu_i - \mu_i^* \right)^2$ where $\lambda_i \ln{\lambda_i}$ is defined as $0$ when $\lambda_i = 0$, $\zeta_i := \lambda_i^2 \nabla g_i(x_i) + \mu_i \nabla h_i(x_i)$, and $\zeta_i^* := \zeta_i(z^*, \lambda_i^*, \mu_i^*)$. 
\end{lemma}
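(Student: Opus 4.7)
The plan is to differentiate $S_i^g$ along the flow \eqref{eq: dynamics of lambda and mu}, collapse it to an expression involving only the constraint values $g_i(x_i)$ and $h_i(x_i)$, and then bound it by the supply rate $(x_i - z^*)^T(\zeta_i - \zeta_i^*)$ using convexity of $g_i$, affineness of $h_i$, and the KKT conditions \eqref{KKT conditions} at the reference point.

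For the derivative step, direct computation gives $\frac{d}{dt}\frac{1}{4}\lambda_i^2 = \frac{1}{2}\lambda_i\dot\lambda_i = \lambda_i^2 g_i(x_i)$; the chain rule on $-\frac{1}{2}{\lambda_i^*}^2 \ln\lambda_i$ produces $-\frac{{\lambda_i^*}^2}{2\lambda_i}\dot\lambda_i = -{\lambda_i^*}^2 g_i(x_i)$, where the factor $\lambda_i$ in $\dot\lambda_i$ cancels the singularity at $\lambda_i = 0$ (consistent with the convention $\lambda_i \ln\lambda_i = 0$ at $\lambda_i = 0$); and the $\mu$-block gives $(\mu_i - \mu_i^*) h_i(x_i)$. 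Summing,
\[
\dot S_i^g = (\lambda_i^2 - {\lambda_i^*}^2)\, g_i(x_i) + (\mu_i - \mu_i^*)\, h_i(x_i).
\]

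For the supply-rate side, convexity of $g_i$ yields both $(x_i - z^*)^T \nabla g_i(x_i) \geq g_i(x_i) - g_i(z^*)$ and $(x_i - z^*)^T \nabla g_i(z^*) \leq g_i(x_i) - g_i(z^*)$; since $h_i$ is affine with $h_i(z^*) = 0$, both $(x_i - z^*)^T \nabla h_i(x_i)$ and $(x_i - z^*)^T \nabla h_i(z^*)$ reduce to $h_i(x_i)$. Multiplying the first convexity inequality by $\lambda_i^2 \geq 0$ and the second by $-{\lambda_i^*}^2 \leq 0$, then adding the equality-block contributions, I obtain
\[
(x_i - z^*)^T(\zeta_i - \zeta_i^*) \geq (\lambda_i^2 - {\lambda_i^*}^2)\bigl(g_i(x_i) - g_i(z^*)\bigr) + (\mu_i - \mu_i^*)\, h_i(x_i).
\]
Comparing with $\dot S_i^g$, the required passivity inequality reduces to $\lambda_i^2 g_i(z^*) - {\lambda_i^*}^2 g_i(z^*) \leq 0$, which follows from complementary slackness ${\lambda_i^*}^2 g_i(z^*) = 0$ and primal feasibility $g_i(z^*) \leq 0$ in \eqref{kkt lambda} together with $\lambda_i^2 \geq 0$.

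It remains to confirm $S_i^g \geq 0$; writing $u := \lambda_i^2$ and $u^* := {\lambda_i^*}^2$, the $\lambda$-block becomes $\tfrac{1}{4}\bigl(u - u^* - u^* \ln(u/u^*)\bigr)$, a Bregman divergence associated with $-\ln$ that is convex in $u$ with unique minimum zero at $u = u^*$, and the $\mu$-block is manifestly non-negative. The main subtlety I expect is the sign bookkeeping when simultaneously invoking both convexity inequalities for $g_i$ with opposite orientation, and the observation that the leftover term $(\lambda_i^2 - {\lambda_i^*}^2) g_i(z^*)$ is exactly killed by the KKT structure rather than by any assumption on the trajectory of $\lambda_i$.
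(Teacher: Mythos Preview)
Your proof is correct and follows essentially the same route as the paper: compute $\dot S_i^g = (\lambda_i^2 - {\lambda_i^*}^2) g_i(x_i) + (\mu_i - \mu_i^*) h_i(x_i)$, bound via the two-sided convexity inequalities for $g_i$ together with affineness of $h_i$, and dispose of the residual $(\lambda_i^2 - {\lambda_i^*}^2)g_i(z^*)$ using complementary slackness and primal feasibility from \eqref{kkt lambda}. Your explicit identification of the $\lambda$-block of $S_i^g$ as a Bregman divergence is a nice touch that the paper only alludes to by ``direct calculation.''
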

\begin{proof}
The storage function $S^{g}_i$ is smooth and differentiable for $\lambda_i$ on $[0,+\infty]$. By direct calculation, $S^{g}_i \geq 0$ and $S^{g}_i = 0$ if and only if $\left(\lambda_i, \mu_i\right) = \left(\lambda_i^*, \mu_i^* \right)$. The time derivative of $S^{g}_i$ gives
\[
\begin{array}{rl}
\dot{S^{g}_i} = &\left( \lambda_i^2 - {\lambda_i^*}^2 \right) g_i(x_i) + \left(\mu_i - \mu_i^* \right) h_i(x_i) \\
    \leq & \lambda_i^2 \left[ g_i(z^*) + \nabla g_i(x_i)^T (x_i - z^*) \right]\\
    	& - {\lambda_i^*}^2 \left[ g_i(z^*) + \nabla g_i(z^*)^T (x_i - z^*) \right]\\
    	& + \left(\mu_i - \mu_i^* \right) \left[\nabla h_i(x_i)^T \left( x_i - z^* \right) \right]\\
    = & [ \lambda_i^2 \nabla g_i(x_i) - {\lambda_i^*}^2 \nabla g_i(z^*)]^T (x_i - z^*) + \lambda_i^2 g_i(z^*)\\
    	& -{\lambda_i^*}^2 g_i(z^*) + \left(\mu_i - \mu_i^* \right) \left[ \nabla h_i(x_i)^T\left( x_i - z^*\right) \right]\\
    \leq & (\zeta_i - \zeta_i^*)^T (x_i - z^*),
\end{array}
\]
where the first inequality follows from the convexity of $g_i$ and affine properties of $h_i$, the second inequality follows from $\nabla h_i(x_i) = \nabla h_i(z^*)$ and the KKT condition \eqref{kkt lambda}.
\end{proof}

We can also observe from \Cref{fig: block diagram of phase lead compensator} that, the system enclosed by the solid line is passive from $\phi_i - \phi_i^*$ to $x_i - z^*$ by \Cref{Lemma passivity of compensator + primal dynamics}. The system within the dashed line is passive from $x_i - z^*$ to $\zeta_i - \zeta_i^*$ by \Cref{Lemma passivity of dual dynamics}.
Moreover, since the communication part in \eqref{eq: dynamics of nu}, \eqref{eq: dynamics of xi} inherits passivity \cite{hatanaka2018passivity}, the overall system \eqref{eq: overall distributed algorithm} can be seen as a feedback interconnection of passive systems. Then, we can obtain the following result on convergence.
\begin{theorem}\label{Theorem 1}
Under Assumptions \ref{Assumption of optimization problem} and \ref{Assumpton of graph}, the trajectories of system \eqref{eq: overall distributed algorithm}
with initial condition $\lambda_i(0) > 0$, $\forall i \in \mathcal{N}$ will converge to a fixed equilibrium point that solves problem \eqref{distributed problem equivalent}.
\end{theorem}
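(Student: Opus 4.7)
The plan is to establish convergence by constructing a global Lyapunov function as the sum of the individual storage functions from Lemmas~1 and~2 plus a storage term for the consensus dynamics, and then invoking LaSalle's invariance principle. Specifically, I would define
\[
V = \sum_{i \in \mathcal{N}} \left( S^c_i + S^g_i \right) + \frac{1}{2} \| \xi - \xi^* \|^2,
\]
where $\xi^*$ is the dual component of a reference KKT point $(x^*,\xi^*,\lambda^*,\mu^*) \in \mathcal{H}^*$. The initial condition $\lambda_i(0) > 0$ guarantees that $S^g_i$ remains well-defined along trajectories since $\dot{\lambda}_i = 2\lambda_i g_i(x_i)$ keeps $\lambda_i(t) > 0$ for all $t \geq 0$ (zero is an invariant boundary of the scalar ODE). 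Thus $V$ is positive semidefinite and vanishes at the reference point.

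Next, I would differentiate $V$ along trajectories of \eqref{eq: overall distributed algorithm}. Using the bound from \eqref{eq: time derivative of Sc}, the passivity inequality established in the proof of Lemma~2, and the identity $\dot{\xi}_i = \sum_{j \in \mathcal{N}_i} a_{ij}(x_j - x_i)$, the cross terms should telescope via the KKT condition \eqref{kkt gradient of x}. Writing $\nu_i = -\nabla f_i(x_i) - \zeta_i + \sum_{j} a_{ij}(x_j - x_i) - \sum_j a_{ij}(\xi_j - \xi_i)$ and noting that $\phi_i - \phi_i^* = \nu_i + \nabla f_i(x_i) - \nu_i^* - \nabla f_i(x_i^*)$, the convexity of each $f_i$ gives $(x_i - z^*)^T [\nabla f_i(x_i) - \nabla f_i(x_i^*)] \geq 0$, while the Laplacian terms combine with $\dot{\xi}$ to produce a skew-symmetric cancellation using $\mathbf{L}x^* = \mathbf{0}$. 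The net result I expect is
\[
\dot{V} \leq - \sum_{i \in \mathcal{N}} \sum_{k = 2}^{m} \frac{b_k^i}{c_k^i} \| \rho_k^i \|^2 - \sum_{i \in \mathcal{N}} (x_i - x_i^*)^T [\nabla f_i(x_i) - \nabla f_i(x_i^*)] \leq 0,
\]
which establishes that $V$ is non-increasing and that all trajectories remain in a compact sublevel set of $V$.

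Having boundedness, I would apply LaSalle's invariance principle. On the largest invariant set $\Omega$ contained in $\{\dot{V} = 0\}$ one has $\rho_k^i \equiv 0$ for $k = 2,\ldots,m$ for all $i$, so by \eqref{eq: time domain of M_i 1} also $\nu_i \equiv 0$ along any trajectory in $\Omega$ (since $\dot{\rho}_k^i = 0$ and $c_k^i > 0$). Consequently $x_i = \rho_1^i$ is constant, hence $\dot{x}_i = 0$; substituting back into \eqref{eq: dynamics of nu}, \eqref{eq: dynamics of xi}, \eqref{eq: dynamics of lambda and mu} shows that $\xi$, $\lambda$, $\mu$ are also stationary and the stationary triple satisfies the KKT system \eqref{KKT conditions}. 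By the lemma preceding the theorem, every such point solves \eqref{distributed problem equivalent}.

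The main obstacle I anticipate is upgrading convergence to the invariant set $\Omega$ into convergence to a \emph{single} equilibrium, since $f$ is only convex (not strictly convex) and the KKT set $\mathcal{H}^*$ may be a continuum. The standard remedy, which I would employ, is to note that $V$ can be defined relative to \emph{any} point in $\mathcal{H}^*$; picking an $\omega$-limit point $(x^\infty,\xi^\infty,\lambda^\infty,\mu^\infty) \in \Omega \subset \mathcal{H}^*$ as the reference yields a Lyapunov function that is non-increasing along the trajectory and vanishes along a subsequence converging to the limit point. Monotonicity then forces convergence of the whole trajectory to that specific equilibrium, which by construction solves the original problem.
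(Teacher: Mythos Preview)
Your overall strategy---the same Lyapunov function $V=\sum_i(S^c_i+S^g_i)+\tfrac12\|\xi-\xi^*\|^2$ and LaSalle's invariance principle---matches the paper exactly, and your final $\omega$-limit argument for convergence to a \emph{single} equilibrium is in fact more careful than what the paper writes. However, two steps in the middle are not yet justified.

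First, your stated bound on $\dot V$ omits the term $-(x-x^*)^T\mathbf{L}(x-x^*)$. The ``skew-symmetric cancellation'' you invoke kills only the cross terms $(x-x^*)^T\mathbf{L}(\xi-\xi^*)-(\xi-\xi^*)^T\mathbf{L}(x-x^*)$; the quadratic Laplacian term survives and is what the paper uses to conclude $\mathbf{L}x=0$ (consensus) directly on the invariant set. Without it you would have to recover consensus indirectly from boundedness of $\xi$ (since $\dot\xi=-\mathbf{L}x$ is constant once $x$ is), which you do not state.

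Second, and more seriously, the sentence ``substituting back \ldots\ shows that $\xi,\lambda,\mu$ are also stationary and the stationary triple satisfies the KKT system'' hides the main difficulty. On the invariant set $\nu_i\equiv 0$ and $x_i$ constant do \emph{not} force $\lambda_i$ to be stationary: if $g_i(x_i)<0$ then $\dot\lambda_i=2\lambda_i g_i(x_i)$ gives $\lambda_i(t)\to 0$ while remaining positive, so the point $(x,\xi,\lambda(t),\mu)$ violates complementary slackness and is not in $\mathcal{H}^*$. Your subsequent claim $\Omega\subset\mathcal{H}^*$ is therefore unproved. The paper handles this by a case analysis on $\lambda_i^*$: when $\lambda_i^*>0$ the logarithmic barrier $-\tfrac12{\lambda_i^*}^2\ln\lambda_i$ inside $S^g_i$ forces $\lambda_i$ to stay bounded away from $0$ (otherwise $V\to\infty$, contradicting $\dot V\le 0$), which in turn forces $g_i(x_i)=0$ on the invariant set; when $\lambda_i^*=0$ one argues the constraint is inactive. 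You need this barrier argument (or an equivalent) before your $\omega$-limit step can be applied.
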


\begin{proof}
Adopt the Lyapunov function candidate $V = \sum_{i \in \mathcal{N}}S^g_i + \sum_{i \in \mathcal{N}} S^c_i + \frac{1}{2} \left\| \xi - \xi^* \right\|^2 \geq 0$. Apparently, $V$ is radially unbounded.
Denote $\zeta = \textrm{col}(\zeta_1, \ldots,\zeta_N)$, $\zeta^* = \textrm{col}(\zeta_1^*, \ldots,\zeta_N^*)$,
by \Cref{Lemma passivity of compensator + primal dynamics,Lemma passivity of dual dynamics}, the time derivative of $V$ satisfies
\begin{align*}
\dot{V} \leq & (x - x^*)^T (\phi - \phi^*) + (\zeta - \zeta^*)^T (x - x^*)\\
& - \left( \xi - \xi^* \right)^T \mathbf{L} \left( x - x^* \right)
\leq  - \left( x - x^*\right)^T \mathbf{L}(x - x^*)
\leq  0,
\end{align*}
where the last inequality follows from the positive semidefiniteness of $\mathbf{L}$.
Then the states are bounded. The set $\Omega_{0} := \left\{(x, \xi, \lambda, \mu, \rho) | V \leq V(0) \right\}$ is a positively invariant set. 
Invoking the LaSalle's invariance principle, the states will converge to the largest invariant set in $\{(x, \xi, \lambda, \mu, \rho) |\dot{V} = 0\}$ which we denote as $\Omega_{c}$ in the subsequent, and $\dot{V} = 0$ only if all the non-negative terms are zero.
Notice that $ - \sum_{k = 2}^{m} \frac{b_k^i}{c_k^i} {\rho^i_k}^T \rho^i_k = 0$ holds only if $\rho_{k}^{i} \equiv \mathbf{0}$, $\forall k = 2, \dots, m$, which implies that $\nu_i = \mathbf{0}$ and thus $x_i$ is unchanged.
$- \left( x - x^*\right)^T \mathbf{L}(x - x^*) = 0$ implies that $\mathbf{L} x = \mathbf{0}$ and $\dot{\xi} = \mathbf{0}$.
Then $x$ satisfies \eqref{kkt gradient of x}.

Next, let us look at the dynamics \eqref{eq: dynamics of lambda and mu} when the states converges to $\Omega_c$. If $\lambda_i^* = 0$, then the constraint $g_i(x_i)$ is inactive, meaning that $g_i(x_i) \leq 0$ when the primal gradient of $\mathcal{L}$ with respect to $x_{i}$ vanishes, i.e., when \eqref{kkt gradient of x} holds. If $\lambda_i^* > 0$, recalling the definition of $S^g_i$, we obtain that $\lambda_i > 0$, for all $t > 0$, because if $\lambda_i \rightarrow 0$, then $S^g_i \rightarrow +\infty$ due to the term $-\frac{1}{2}{\lambda_i^*}^2 \ln{\lambda_i}$, which contradicts the fact that $V$ is decreasing. Since $\lambda_i$ is nonzero, $g_i(x_i)$ should be zero to ensure stability and boundedness of the dynamics $\dot{\lambda}_i = 2\lambda_i g_i$.
Similarly, it can be observed that $h_i(x_i)$ should be zero to ensure boundedness.
Therefore, the KKT condition \eqref{KKT conditions} is satisfied, i.e., $(x, \xi, \lambda, \mu) \in \mathcal{H}^*$ and is unchanged whenever $(x, \xi, \lambda, \mu, \rho)$ converges to $\Omega_{c}$. In conclusion, the trajectories generated by the algorithm will asymptotically converge to a constant equilibrium point that solves problem \eqref{distributed problem equivalent}.
\end{proof}


\section{Constrained Distributed Optimization with Heterogeneous Communication Delays}\label{Section Delays}
Let us consider the presence of unknown and heterogeneous inter-agent communication delays in this section. For $(i, j) \in \mathcal{E}$, let the communication delay from agent $i$ to $j$ be denoted by a constant $T_{ij}$.
In this case, each agent cannot catch the current variable of its neighboring agents. Then the algorithm under delays becomes
\begin{subequations}\label{eq: overall distributed algorithm under delays}
\begin{align*}
\nu_i = & - \nabla f_i(x_i) - \lambda_{i}^2 \nabla g_i(x_i) - \mu_i \nabla h_i(x_i)\nonumber\\
& + \sum_{j \in \mathcal{N}_i} a_{ij} (r_{ij}^{x} - x_i) - \sum_{j \in \mathcal{N}_i} a_{ij} (r_{ij}^{\xi} - \xi_i)\numberthis \label{eq: dynamics of nu under delays}\\
\dot{\xi}_i = & \sum_{j \in \mathcal{N}_i} a_{ij} (r_{ij}^{x} - x_i) \numberthis\label{eq: dynamics of xi under delays}\\
&\eqref{eq: time domain of M_i 1}, \eqref{eq: time domain of M_i 3}, \eqref{eq: dynamics of lambda and mu}
\end{align*}
\end{subequations}
where \eqref{eq: dynamics of nu under delays} and \eqref{eq: dynamics of xi under delays} are the modified update expression against \eqref{eq: dynamics of nu} and \eqref{eq: dynamics of xi} by replacing the neighbor's information with $r_{ij}^{x}$, $r_{ij}^{\xi}$, which denote the information agent $i$ receives from agent $j$.
The dynamics \eqref{eq: dynamics of nu under delays}, \eqref{eq: dynamics of xi under delays} can be rewritten as 
\[ \begin{array}{rl}
\begin{bmatrix}
\nu_i\\
\dot{\xi}_i
\end{bmatrix}
= 
\begin{bmatrix}
- \nabla f_i(x_i) - \zeta_i\\
\mathbf{0}
\end{bmatrix}
+
p_{i} \numberthis \label{eq: rewritten of nu and xi}
\end{array}
\]
where $\zeta_i$ is defined in \Cref{Lemma passivity of dual dynamics},
$
p_{i} := \sum_{j \in \mathcal{N}_{i}} p_{ij}, ~
p_{ij} :=
\begin{bmatrix}
p_{ij}^{x}\\
p_{ij}^{\xi}
\end{bmatrix}
=
E_{ij}
\begin{bmatrix}
r_{ij}^{x} - x_i\\
r_{ij}^{\xi} - \xi_i
\end{bmatrix}
$
and 
$
E_{ij} :=
\begin{bmatrix}
a_{ij} & -a_{ij}\\
a_{ij} & 0
\end{bmatrix} \otimes I_{n},
~ j \in \mathcal{N}_i.
$

Define
$
\bar{x}_i := x_i - z^*, ~ \bar{\xi}_i := \xi_i - 2{\xi_i^*}$,
$
\bar{p}_i := p_i - \sum_{j \in \mathcal{N}_i} p_{ij}^* = \sum_{j \in \mathcal{N}_i} \bar{p}_{ij}, ~ 
p_{ij}^* :=
a_{ij}
\begin{bmatrix}
\xi_{i}^{*} - \xi_{j}^{*}\\
\mathbf{0}
\end{bmatrix}$,
$
r_{ij} :=
\begin{bmatrix}
r_{ij}^{x}\\
r_{ij}^{\xi}
\end{bmatrix},~
\bar{r}_{ij} := \begin{bmatrix}
\bar{r}_{ij}^{x}\\
\bar{r}_{ij}^{\xi}
\end{bmatrix}
=
r_{ij} - r_{ij}^*, ~ 
r_{ij}^* := 
\begin{bmatrix}
z^*\\
\xi_i^* - \xi_j^*
\end{bmatrix}.
$
We show that the dynamics under communication delays preserves passivity-like properties.
\begin{lemma}\label{lemma passivity-like property}
Under \Cref{Assumption of optimization problem}, the system given by \eqref{eq: overall distributed algorithm under delays} has the passivity-like property $\dot{S}_i \leq \sum_{j\in\mathcal{N}_i} \bar{r}_{ij}^T \bar{p}_{ij}$ where $S_{i} = S_{i}^{c} + S_{i}^{g} + \frac{1}{2} \left\| \bar{\xi}_i \right\|^2$, $S_{i}^{c}$ and $S_{i}^{g}$ are defined in \Cref{Lemma passivity of compensator + primal dynamics,Lemma passivity of dual dynamics}.
\end{lemma}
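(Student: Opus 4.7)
The plan is to differentiate $S_i = S_i^c + S_i^g + \tfrac{1}{2}\|\bar\xi_i\|^2$ term by term, apply the passivity bounds established in Lemmas \ref{Lemma passivity of compensator + primal dynamics} and \ref{Lemma passivity of dual dynamics}, and then absorb everything into $\sum_{j\in\mathcal{N}_i}\bar r_{ij}^T\bar p_{ij}$ by unpacking the definitions of $\bar p_{ij}$, $\bar r_{ij}$, $\bar\xi_i$ and invoking the KKT condition \eqref{kkt gradient of x}. Note that Lemmas \ref{Lemma passivity of compensator + primal dynamics} and \ref{Lemma passivity of dual dynamics} are stated for the delay-free algorithm, but the internal dynamics of the compensator \eqref{eq: time domain of M_i 1}--\eqref{eq: time domain of M_i 3} and of the multipliers \eqref{eq: dynamics of lambda and mu} are unchanged under delays; only the external signal $\nu_i$ differs. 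Hence their storage-function inequalities still apply as input/output bounds, which is the crucial observation that allows reuse.

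First, I would write, via Lemma \ref{Lemma passivity of compensator + primal dynamics},
\[
\dot S_i^c \le (x_i - z^*)^T(\phi_i - \phi_i^*) = \bar x_i^T(\phi_i - \phi_i^*),
\]
and via Lemma \ref{Lemma passivity of dual dynamics},
\[
\dot S_i^g \le (\zeta_i - \zeta_i^*)^T\bar x_i.
\]
Next, I would substitute the rewritten dynamics \eqref{eq: rewritten of nu and xi} into the definition of $\phi_i = \nu_i + \nabla f_i(x_i)$, obtaining $\phi_i = -\zeta_i + p_i^x$; evaluating at the equilibrium (where $\nu_i^*=0$) together with the KKT identity \eqref{kkt gradient of x} yields $\phi_i^* = -\zeta_i^* + p_i^{x*}$, so that
\[
\phi_i - \phi_i^* = -(\zeta_i - \zeta_i^*) + \bar p_i^x.
\]
Summing the two inequalities then produces a clean cancellation of the $\zeta_i - \zeta_i^*$ terms, leaving $\dot S_i^c + \dot S_i^g \le \bar x_i^T\bar p_i^x$.

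For the $\bar\xi_i$ term, since the second block of $p_{ij}^*$ vanishes I have $\dot\xi_i = p_i^\xi = \bar p_i^\xi$, whence $\tfrac{d}{dt}\tfrac{1}{2}\|\bar\xi_i\|^2 = \bar\xi_i^T\bar p_i^\xi$. Adding the three contributions gives
\[
\dot S_i \;\le\; \bar x_i^T\bar p_i^x + \bar\xi_i^T\bar p_i^\xi \;=\; \sum_{j\in\mathcal{N}_i}\bigl(\bar x_i^T\bar p_{ij}^x + \bar\xi_i^T\bar p_{ij}^\xi\bigr).
\]

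The main obstacle, and the final step, is to convert this edgewise sum into $\sum_{j\in\mathcal{N}_i}\bar r_{ij}^T\bar p_{ij}$. To do this I would expand $\bar p_{ij} = E_{ij}\bigl[(r_{ij}^x - x_i) - (r_{ij}^{x*} - z^*),\,(r_{ij}^\xi - \xi_i) - (r_{ij}^{\xi*} - \xi_i^*)\bigr]^T$ using the specific form of $E_{ij}$, rewrite each bracketed quantity in terms of $\bar r_{ij}^{x} - \bar x_i$ and $\bar r_{ij}^{\xi} - (\xi_i - \xi_i^*)$, and exploit the chosen offset $\bar\xi_i = \xi_i - 2\xi_i^*$ (which, together with the asymmetric structure of $E_{ij}$, is precisely engineered so that all cross terms reassemble into the port variables $\bar r_{ij}$). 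This bookkeeping is where the factor of two in the definition of $\bar\xi_i$ earns its keep; once the identity $\bar x_i^T\bar p_{ij}^x + \bar\xi_i^T\bar p_{ij}^\xi = \bar r_{ij}^T\bar p_{ij}$ is verified edge by edge, summing over $j\in\mathcal{N}_i$ completes the proof.
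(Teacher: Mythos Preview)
Your approach tracks the paper's proof closely up to the point
$\dot S_i \le \sum_{j\in\mathcal N_i}\bigl(\bar x_i^{T}\bar p_{ij}^{x}+\bar\xi_i^{T}\bar p_{ij}^{\xi}\bigr)$,
and your observation that the compensator and multiplier dynamics are unchanged under delays (so Lemmas~\ref{Lemma passivity of compensator + primal dynamics} and~\ref{Lemma passivity of dual dynamics} still apply) is exactly the right starting point. The gap is in your final step.

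The edgewise relation you announce, $\bar x_i^{T}\bar p_{ij}^{x}+\bar\xi_i^{T}\bar p_{ij}^{\xi}=\bar r_{ij}^{T}\bar p_{ij}$, is \emph{not} an identity. Even in the simplest situation $\xi_i^{*}=\xi_j^{*}=0$ (so $p_{ij}^{*}=0$, $\bar p_{ij}=p_{ij}$), the difference is
\[
\begin{bmatrix}\bar x_i-\bar r_{ij}^{x}\\ \bar\xi_i-\bar r_{ij}^{\xi}\end{bmatrix}^{T}\bar p_{ij}
= -\begin{bmatrix}r_{ij}^{x}-x_i\\ r_{ij}^{\xi}-\xi_i\end{bmatrix}^{T}E_{ij}\begin{bmatrix}r_{ij}^{x}-x_i\\ r_{ij}^{\xi}-\xi_i\end{bmatrix}
= -a_{ij}\|x_i-r_{ij}^{x}\|^{2},
\]
where the off-diagonal blocks of $E_{ij}$ cancel each other and only the $(1,1)$ block survives. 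This quantity is nonpositive but generically nonzero, so the ``all cross terms reassemble'' picture is wrong; the offset $\bar\xi_i=\xi_i-2\xi_i^{*}$ is not engineered to produce equality but to make the residual a clean negative quadratic. Relatedly, your proposed expansion $\bar p_{ij}=E_{ij}\bigl[(r_{ij}^{x}-x_i)-(r_{ij}^{x*}-z^{*}),\,(r_{ij}^{\xi}-\xi_i)-(r_{ij}^{\xi*}-\xi_i^{*})\bigr]^{T}$ is incorrect: $p_{ij}^{*}$ is \emph{defined} as $a_{ij}\,\mathrm{col}(\xi_i^{*}-\xi_j^{*},\,\mathbf 0)$ and does not equal $E_{ij}$ applied to the starred arguments.

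The paper handles this step exactly by the add-and-subtract device: write $[\bar x_i;\bar\xi_i]=[\bar r_{ij}^{x};\bar r_{ij}^{\xi}]+[\bar x_i-\bar r_{ij}^{x};\bar\xi_i-\bar r_{ij}^{\xi}]$, show the second inner product equals $-a_{ij}\|\bar x_i-\bar r_{ij}^{x}\|^{2}$, and then drop it. If you carry out your bookkeeping honestly you will arrive at
\[
\dot S_i \;\le\; \sum_{j\in\mathcal N_i}\bar r_{ij}^{T}\bar p_{ij}\;-\;\sum_{j\in\mathcal N_i}a_{ij}\|x_i-r_{ij}^{x}\|^{2}\;\le\;\sum_{j\in\mathcal N_i}\bar r_{ij}^{T}\bar p_{ij},
\]
which is precisely what is needed; just do not call it an identity.
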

\begin{proof}
From the former lemmas, $S_i \geq 0$. The time derivative of $S_{i}$ satisfies 
\[ \begin{array}{rl}
\dot{S}_i \leq & - \sum_{k = 2}^{m} \frac{b_k^i}{c_k^i} {\rho_k^i}^T \rho_k^i + \bar{x}_i^T \left[- \nabla f_i(x_i) - \zeta_i \right]\\
&
+ \bar{x}_i^T \left(\zeta_i - \zeta_i^*\right) +
\begin{bmatrix}
\bar{x}_i & \bar{\xi}_i
\end{bmatrix}
p_i\\
\leq &
- \bar{x}_i^T \left(\zeta_i - \zeta_i^*\right)
+ \bar{x}_i^T \left(\zeta_i - \zeta_i^*\right)
- \sum_{k = 2}^{m} \frac{b_k^i}{c_k^i} {\rho_k^i}^T \rho_k^i\\
& + \bar{x}_{i}^T \sum_{j \in \mathcal{N}_i} a_{ij} \left( \xi_j^* - \xi_i^*\right)
+ \sum_{j \in \mathcal{N}_i}\begin{bmatrix}
\bar{x}_i &
\bar{\xi}_i
\end{bmatrix}
p_{ij}\numberthis \label{time derivative of S_i}\\
= & - \sum_{k = 2}^{m} \frac{b_k^i}{c_k^i} {\rho_k^i}^T \rho_k^i + \sum_{j \in \mathcal{N}_i}
\begin{bmatrix}
\bar{x}_i + \bar{r}_{ij}^{x} - \bar{r}_{ij}^{x} \\
\bar{\xi}_i + \bar{r}_{ij}^{\xi} - \bar{r}_{ij}^{\xi}
\end{bmatrix}^T
\bar{p}_{ij}\\
\leq & \sum_{j \in \mathcal{N}_i} \bar{r}_{ij}^T \bar{p}_{ij} - \sum_{j\in \mathcal{N}_{i}} a_{ij} \left\| \bar{x} - \bar{r}_{ij}^{x} \right\|^2\\
\leq &
\sum_{j \in \mathcal{N}_i} \bar{r}_{ij}^T \bar{p}_{ij}
\end{array}
\]
where the first inequality follows from \eqref{eq: time derivative of Sc}, \eqref{eq: rewritten of nu and xi} and \Cref{Lemma passivity of dual dynamics}.
\end{proof}

If state variables are exchanged, then agent $i$ at time $t$ receives $r_{ij}^{x} = x_{j}(t - T_{ji})$ and $r_{ij}^{\xi} = \xi_{j}(t - T_{ji})$ from agent $j$ due to the existence of delays. However, this may cause instability and divergence to the dynamics \cite{hatanaka2018passivity}. Thus, we do not directly exchange original state variables here.
To ensure stability under delays, a scattering transformation method is introduced \cite{anderson1989bilateral,hatanaka2015passivity}.
The scattering transformation in this work is defined as
\begin{subequations}\label{scattering variables}
\begin{align}
s_{\overrightarrow{ij}} = & \frac{1}{\sqrt{2\eta}}(- p_{ij} + \eta r_{ij}),  & s_{\overleftarrow{ij}} = \frac{1}{\sqrt{2\eta}}(p_{ij} + \eta r_{ij})\\
s_{\overleftarrow{ji}} = & \frac{1}{\sqrt{2\eta}}(p_{ji} + \eta r_{ji}), & s_{\overrightarrow{ji}} = \frac{1}{\sqrt{2\eta}}(- p_{ji} + \eta r_{ji})
\end{align}
\end{subequations}
for $(i, j) \in \mathcal{E}$, where $\eta > 0$.
Specifically, $s_{\overrightarrow{ij}}$ denotes the signal that agent $i$ sends to agent $j$ while $s_{\overleftarrow{ji}}$ represents the signal $j$ receives from $i$. The other notations are defined similarly. Due to the delays, these signals should satisfy
\begin{equation}\label{scattering variables relation with delays}
    s_{\overleftarrow{ji}}(t) = s_{\overrightarrow{ij}}(t - T_{ij}), ~~ s_{\overleftarrow{ij}}(t) = s_{\overrightarrow{ji}}(t - T_{ji}).
\end{equation}
Instead of directly exchanging $x$ and $\xi$, scattering variables \eqref{scattering variables} are exchanged between agent $i$ and $j$ for $(i, j) \in \mathcal{E}$. Then the input variables $r_{ij}^{x}$, $r_{ij}^{\xi}$ for each agent are computed from these scattering variables. For simplicity, we suppose that $s_{\overleftarrow{ij}}(t) = s_{\overrightarrow{ij}}(t) = 0$, $\forall t < 0$.

It has been proved that the system of the scattering transformation inherits passivity properties. 
\begin{lemma}[\cite{hatanaka2018passivity}]\label{lemma passivity of scattering transformation}
The system consisting of \eqref{scattering variables} and \eqref{scattering variables relation with delays} is passive from $-[\bar{p}_{ij}^T, ~ \bar{p}_{ji}^T]^T$ to $[\bar{r}_{ij}^T, ~ \bar{r}_{ji}^T]^T$ with respect to the storage function
\[
\begin{array}{rl}
V_{ij}(t) = \frac{1}{2}
\displaystyle \int_{0}^{t}
& \left( \left\|s_{\overrightarrow{ij}}(\tau) + \gamma_{ij}^* \right\|^2 - \left\|s_{\overleftarrow{ji}}(\tau) + \gamma_{ij}^* \right\|^2 \right.\\
& \left. + \left\|s_{\overrightarrow{ji}}(\tau) - \delta_{ij}^* \right\|^2 - \left\|s_{\overleftarrow{ij}}(\tau) - \delta_{ij}^* \right\|^2 \right)d \tau\\
& + \frac{T_{ij}}{2}(\gamma_{ij}^*)^2 + \frac{T_{ji}}{2} (\delta_{ij}^*)^2
\end{array}
\]
where $\gamma_{ij}^* := \frac{1}{\sqrt{2\eta}} \left( p_{ij}^* - \eta r_{ij}^* \right)$, $\delta_{ij}^* := \frac{1}{\sqrt{2\eta}} \left( p_{ij}^* + \eta r_{ij}^* \right)$.
\end{lemma}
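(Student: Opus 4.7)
The plan is to recognize $V_{ij}$ as a shifted Chopra--Spong-type wave-variable energy and reduce the statement to the elementary scattering identity $\|s_{\overleftarrow{}}\|^2 - \|s_{\overrightarrow{}}\|^2 = 2 p^T r$ applied to the error variables. The structural facts I would exploit are that the additive constants $\pm\gamma_{ij}^*, \pm\delta_{ij}^*$ in the integrand are chosen so that $s_{\overrightarrow{ij}}+\gamma_{ij}^*$ and $s_{\overleftarrow{ij}}-\delta_{ij}^*$ are precisely the scattering transforms of the error variables $(\bar{p}_{ij},\bar{r}_{ij})$ (and analogously for the $(j,i)$ pair, using $p_{ji}^*=-p_{ij}^*$, which follows from $a_{ij}=a_{ji}$), and that the delay relations \eqref{scattering variables relation with delays} carry through these shifts because the constants coincide with the equilibrium values of the corresponding scattering variables.

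First I would establish non-negativity. Using \eqref{scattering variables relation with delays}, change variables in the two $\|s_{\overleftarrow{}}\|^2$ integrals to reduce them to integrals of $s_{\overrightarrow{}}$. On the negative-time portions $[-T_{ij},0)$ and $[-T_{ji},0)$, the initial condition $s\equiv 0$ for $t<0$ makes the integrands equal to $\|\gamma_{ij}^*\|^2$ and $\|\delta_{ij}^*\|^2$; these contribute $\tfrac{T_{ij}}{2}\|\gamma_{ij}^*\|^2$ and $\tfrac{T_{ji}}{2}\|\delta_{ij}^*\|^2$, which are exactly cancelled by the two explicit additive constants in $V_{ij}$. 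What remains is
\[
V_{ij}(t) = \tfrac{1}{2}\int_{t-T_{ij}}^{t}\|s_{\overrightarrow{ij}}(\sigma)+\gamma_{ij}^*\|^2\,d\sigma + \tfrac{1}{2}\int_{t-T_{ji}}^{t}\|s_{\overrightarrow{ji}}(\sigma)-\delta_{ij}^*\|^2\,d\sigma \geq 0,
\]
which is the wave energy currently in transit on the two directed links.

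Differentiating the original integral form of $V_{ij}$ gives a four-term expression at time $t$. Regrouping the four terms into same-link pairs and substituting the identifications from the first paragraph gives, via the standard scattering identity, the shifted identity $\|s_{\overleftarrow{ij}}-\delta_{ij}^*\|^2 - \|s_{\overrightarrow{ij}}+\gamma_{ij}^*\|^2 = 2\bar{p}_{ij}^T\bar{r}_{ij}$ together with its $(j,i)$ analogue; summing these collapses $\dot{V}_{ij}$ to
\[
\dot{V}_{ij}(t) = -\bar{p}_{ij}^T\bar{r}_{ij} - \bar{p}_{ji}^T\bar{r}_{ji} = \bar{r}_{ij}^T(-\bar{p}_{ij}) + \bar{r}_{ji}^T(-\bar{p}_{ji}),
\]
which is precisely the passivity supply from input $-[\bar{p}_{ij}^T,\bar{p}_{ji}^T]^T$ to output $[\bar{r}_{ij}^T,\bar{r}_{ji}^T]^T$.

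The main obstacle is purely bookkeeping: verifying that each of the four shifts $\pm\gamma_{ij}^*, \pm\delta_{ij}^*$ is attached to the correct scattering variable so that (a) the delay change-of-variables matches the additive constants used for non-negativity and (b) the same-link pairs collapse under the scattering identity without leftover cross-terms. Once those identifications are made, both halves of the lemma reduce to routine expansions of the squared norms.
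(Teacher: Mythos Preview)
The paper does not supply its own proof of this lemma; it is quoted with attribution to \cite{hatanaka2018passivity}. Your argument---rewriting $V_{ij}$ as the energy-in-transit integrals via the delay change of variables (the zero initial data producing exactly the constant offsets $\tfrac{T_{ij}}{2}\|\gamma_{ij}^*\|^2+\tfrac{T_{ji}}{2}\|\delta_{ij}^*\|^2$) to get non-negativity, and then differentiating, regrouping into same-link pairs, and collapsing each pair by the shifted scattering identity $\|s_{\overleftarrow{}}\|^2-\|s_{\overrightarrow{}}\|^2=2p^Tr$ applied to the error variables---is precisely the Chopra--Spong computation used in that reference, and is correct; your remark that the only nontrivial step is matching each of the four shifts $\pm\gamma_{ij}^*,\pm\delta_{ij}^*$ to the right signal (so that the $(j,i)$ pair also reduces to error variables, using $p_{ji}^*=-p_{ij}^*$ and the fact that $-\gamma_{ij}^*,\delta_{ij}^*$ are the equilibrium scattering values on both directions of the link) is exactly the point where care is needed.
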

Since $\bar{p}_{ij} = p_{ij} - p_{ij}^*$, we can easily obtain the following lemma by taking $p_{ij}^* = p_{ji}^* =  \mathbf{0}$ in the proof of \Cref{lemma passivity of scattering transformation}.
\begin{lemma}\label{lemma passivity of scattering transformation 2}
The system consisting of \eqref{scattering variables} and \eqref{scattering variables relation with delays} is passive from $-[p_{ij}^T, ~ p_{ji}^T]^T$ to $[\bar{r}_{ij}^T, ~ \bar{r}_{ji}^T]^T$.
\end{lemma}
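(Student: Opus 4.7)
The plan is to specialize the proof of \Cref{lemma passivity of scattering transformation} to the case $p_{ij}^* = p_{ji}^* = \mathbf{0}$, exactly as the statement hints. Because $\bar{p}_{ij} = p_{ij} - p_{ij}^*$, enforcing $p_{ij}^* = \mathbf{0}$ makes $\bar{p}_{ij} \equiv p_{ij}$, so the passivity inequality in \Cref{lemma passivity of scattering transformation}, namely $\dot{V}_{ij} \leq -\bar{p}_{ij}^T \bar{r}_{ij} - \bar{p}_{ji}^T \bar{r}_{ji}$, is formally identical to $\dot{V}_{ij} \leq -p_{ij}^T \bar{r}_{ij} - p_{ji}^T \bar{r}_{ji}$ once the storage function is consistently redefined. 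The task therefore reduces to checking that none of the steps in the original proof rely on any equilibrium relation linking $p_{ij}^*$ with $r_{ij}^*$: both should enter only as fixed constants in the completed-square expressions inside $V_{ij}$.

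First, I would redefine the constants via $p_{ij}^* = \mathbf{0}$, giving $\gamma_{ij}^* = -\sqrt{\eta/2}\, r_{ij}^*$ and $\delta_{ij}^* = \sqrt{\eta/2}\, r_{ij}^*$, and confirm that the redefined $V_{ij}$ remains nonnegative: the terminal offsets $\tfrac{T_{ij}}{2}(\gamma_{ij}^*)^2$ and $\tfrac{T_{ji}}{2}(\delta_{ij}^*)^2$ are still nonnegative, and the integral pieces are dominated from below by the same delay-storage argument used in \cite{hatanaka2018passivity}. Second, I would differentiate $V_{ij}$ and apply the scattering identities $\|s_{\overrightarrow{ij}}\|^2 - \|s_{\overleftarrow{ij}}\|^2 = -2\, p_{ij}^T r_{ij}$ together with the delay relations \eqref{scattering variables relation with delays}, which hold independently of any reference choice, to convert the quadratic norm differences into bilinear expressions in $(p_{ij}, r_{ij})$ plus a nonnegative delay integral. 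Third, I would collect the remaining $(\gamma_{ij}^*, \delta_{ij}^*)$-linear cross terms, which now depend only on $r_{ij}^*$, and recombine them with $p_{ij}^T r_{ij}$ so as to form $-p_{ij}^T(r_{ij} - r_{ij}^*) = -p_{ij}^T \bar{r}_{ij}$ (and symmetrically for the $(j,i)$ channel), producing the stated inequality.

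The main obstacle is the algebraic bookkeeping of these cross terms: one must check that, after the substitution, the linear pieces in $s_{\overrightarrow{ij}}, s_{\overleftarrow{ij}}, s_{\overrightarrow{ji}}, s_{\overleftarrow{ji}}$ assemble into exactly the multiple of $p_{ij}^T r_{ij}^*$ required to turn $-p_{ij}^T r_{ij}$ into $-p_{ij}^T \bar{r}_{ij}$. Since the cited proof already achieves this recombination for arbitrary reference pairs $(p_{ij}^*, r_{ij}^*)$, the substitution $p_{ij}^* \mapsto \mathbf{0}$ preserves every structural identity there and introduces no extra assumption; the lemma then follows as a direct specialization of \Cref{lemma passivity of scattering transformation}.
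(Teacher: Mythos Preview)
Your proposal is correct and follows exactly the paper's own approach: the paper simply remarks that since $\bar{p}_{ij} = p_{ij} - p_{ij}^*$, taking $p_{ij}^* = p_{ji}^* = \mathbf{0}$ in the proof of \Cref{lemma passivity of scattering transformation} immediately yields the claim. Your write-up just unpacks this specialization in more detail (redefining $\gamma_{ij}^*,\delta_{ij}^*$, verifying nonnegativity, and tracking the cross terms), but the underlying argument is identical.
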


Following the above lemmas, the algorithm with scattering transformation controllers can be viewed as a feedback interconnection of passive systems and hence preserves passivity.
Then, we can obtain the convergence result.
\begin{theorem}\label{Theorem 2}
Under Assumptions \ref{Assumption of optimization problem} and \ref{Assumpton of graph}, the trajectories of system \eqref{eq: overall distributed algorithm under delays}
with controller \eqref{scattering variables}, \eqref{scattering variables relation with delays} and initial condition $\lambda_i(0) > 0$, $\forall i \in \mathcal{N}$ will asymptotically converge to a fixed equilibrium that solves problem \eqref{distributed problem equivalent}.
\end{theorem}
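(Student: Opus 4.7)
The plan is to view the closed loop formed by the $N$ agent dynamics \eqref{eq: overall distributed algorithm under delays} and the scattering transformation channels \eqref{scattering variables}--\eqref{scattering variables relation with delays} as a negative feedback interconnection of passive blocks --- one per agent, supplied by \Cref{lemma passivity-like property}, and one per edge, supplied by \Cref{lemma passivity of scattering transformation} --- and to construct the composite Lyapunov--Krasovskii functional
\[
W(t) := \sum_{i\in\mathcal{N}} S_i(t) + \sum_{(i,j)\in\mathcal{E}} V_{ij}(t) \ge 0.
\]

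Differentiating $W$ along trajectories, the cross terms $\bar r_{ij}^T \bar p_{ij}$ produced on the agent side by \Cref{lemma passivity-like property} cancel exactly against the opposite-signed supply absorbed by the scattering storage on each edge, leaving only the strictly dissipative residue already exposed inside the proof of \Cref{lemma passivity-like property}: the $\|\rho_k^i\|^2$ terms for $k\ge 2$ and the $a_{ij}\|\bar x_i-\bar r_{ij}^{x}\|^2$ terms. This yields $\dot W\le 0$, boundedness of all states and of the scattering integrals and --- via the logarithmic barrier sitting inside $S_i^{g}$ --- the fact that $\lambda_i(t)$ stays strictly positive whenever $\lambda_i^{*}>0$, exactly as in the proof of \Cref{Theorem 1}.

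I would then invoke the LaSalle--Krasovskii invariance principle for functional differential equations (Hale's form for retarded systems). This is the step at which the architectural choices of the paper pay off: the projection-free generalized Lagrangian and the smooth scattering map make the closed-loop right-hand side continuous (indeed locally Lipschitz), so the invariance principle applies without the extra graph-theoretic assumption used in \cite{hatanaka2018passivity}. On the largest invariant set, $\dot W\equiv 0$ forces $\rho_k^i\equiv\mathbf{0}$ for $k\ge 2$, hence $\nu_i\equiv\mathbf{0}$ and each $x_i$ is constant, together with $\bar x_i\equiv\bar r_{ij}^{x}$ across every edge. On constant solutions the scattering relations \eqref{scattering variables relation with delays} eliminate the delays and, combined with connectedness of $\mathcal{G}$, yield a common consensus value $x_i=z^{*}$ and $\dot\xi_i\equiv\mathbf{0}$. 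The remaining KKT argument --- $g_i(z^{*})=0$ whenever $\lambda_i^{*}>0$ from $\dot\lambda_i=2\lambda_i g_i(x_i)$ and the constancy of $\lambda_i>0$, $h_i(z^{*})=0$ from $\dot\mu_i=h_i(x_i)$ and the constancy of $\mu_i$, and $g_i(z^{*})\le 0$ when $\lambda_i^{*}=0$ being inherited from $\nu_i=\mathbf{0}$ and consensus --- then reproduces the second half of the proof of \Cref{Theorem 1}, showing $(x,\xi,\lambda,\mu)\in\mathcal{H}^{*}$.

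The step I expect to be the main obstacle is the rigorous application of LaSalle--Krasovskii to this delay-with-scattering system: $V_{ij}$ contains an explicit time integral of the scattering signals, so Hale's pre-compactness hypothesis must be verified, which reduces to checking that the $L^{2}$ estimates on the scattering integrands supplied by monotonicity of $W$ imply uniform continuity of the relevant trajectory segments. A secondary but essential concern is the sign and indexing bookkeeping between \Cref{lemma passivity of scattering transformation} (whose storage is phrased in terms of $\bar p$ and is thus the correct one to sum into $W$) and the alternative \Cref{lemma passivity of scattering transformation 2} (whose input side uses the unshifted $p$); selecting the former is precisely what enables the cross-term cancellation identified above.
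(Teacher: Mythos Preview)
Your construction of $W=\sum_i S_i+\sum_{(i,j)}V_{ij}$, the cross-term cancellation via \Cref{lemma passivity-like property} and \Cref{lemma passivity of scattering transformation}, and the invocation of the LaSalle invariance principle for retarded systems reproduce the paper's Step~1 verbatim, including the residual terms and the conclusions $\nu_i\equiv\mathbf{0}$, $x_i$ constant, $x_i=r_{ij}^{x}$, $\dot\xi_i\equiv\mathbf{0}$ on the largest invariant set. The divergence is in what the paper calls Step~2. The paper explicitly notes that the information from Step~1 is ``insufficient to conclude the optimality yet,'' and it does \emph{not} argue consensus directly from the steady-state scattering relations as you propose. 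Instead it re-expands $\dot{\bar V}$ on the invariant set, this time invoking \Cref{lemma passivity of scattering transformation 2}---precisely the unshifted-$p$ version you set aside---to rewrite the communication contribution, and then assembles a saddle-point inequality for the auxiliary Lagrangian $\mathcal L_g(x,\lambda,\mu)=f(x)+\sum_i[\lambda_i^2 g_i(x_i)+\mu_i h_i(x_i)]$. From $\dot{\bar V}=0$ it extracts $\sum_i f_i(x_i)=\sum_i f_i(z^{*})$ and $(\lambda_i^{*})^2 g_i(x_i)=0$ through a four-case analysis on the sign of $g_i$. Both scattering lemmas are therefore used in the paper, for different purposes.

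Your alternative---read off consensus from the scattering map at equilibrium and then rerun the KKT argument of \Cref{Theorem 1}---may be completable, but as written it has two genuine gaps. First, the sentence ``the scattering relations eliminate the delays and yield a common consensus value $x_i=z^{*}$'' hides the entire difficulty: from $x_i=r_{ij}^{x}$ and constancy of $x_i,\xi_i$ you must still solve the stationary version of \eqref{scattering variables}--\eqref{scattering variables relation with delays} to deduce $r_{ij}=r_{ji}$ and $p_{ij}=-p_{ji}$, hence $x_i=x_j$ and $r_{ij}^{\xi}=\tfrac12(\xi_i+\xi_j)$; only then does $\nu_i=\mathbf{0}$ reduce to \eqref{kkt gradient of x}, and even so only after rescaling $\xi$ by a factor of two. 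None of this is stated. Second, you conflate the limit with the fixed reference optimum $z^{*}$: under mere convexity the optimizer is not unique, so the most one can claim is that the limit lies in $\mathcal H^{*}$. Your subsequent assertions ``$g_i(z^{*})=0$ whenever $\lambda_i^{*}>0$'' and ``$h_i(z^{*})=0$'' are, as written, facts about the \emph{reference} point already guaranteed by \eqref{kkt lambda}, not conclusions about the trajectory; the argument you need is for the (a priori different) consensus limit, and that distinction must be kept explicit throughout.
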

\begin{proof}
\textbf{Step 1}: We adopt the Lyapunov function candidate $\bar{V} = \sum_{i \in \mathcal{N}} S_i + \sum_{(i,j) \in \mathcal{E}} V_{ij}$ where $S_i$ and $V_{ij}$ are defined in \Cref{lemma passivity-like property} and \Cref{lemma passivity of scattering transformation}, respectively. Obviously, $\bar{V} \geq 0$ and is radially unbounded. By following \Cref{lemma passivity-like property} and \Cref{lemma passivity of scattering transformation}, the time derivative of $\bar{V}$ satisfies
\begin{equation}\label{time derivative of bar_V}
\dot{\bar{V}} \leq \sum_{i \in \mathcal{N}} \left\{ - \sum_{k = 2}^{m} \frac{b_k^i}{c_k^i} {\rho_k^i}^T\rho_k^i - \sum_{j\in \mathcal{N}_{i}} a_{ij} \left\| \bar{x} - \bar{r}_{ij}^{x} \right\|^2 \right\}
\leq 0.
\end{equation}
Then the system states are bounded.
The set $\bar{\Omega}_{0} : = \{(x, \xi, \lambda, \mu, \rho) | \bar{V} \leq \bar{V}(0) \}$ is a positively invariant set.
By the LaSalle's invariance principle for delay systems \cite[Theorem 5.17]{smith2011introduction}, the states will converge to the largest invariant set $\bar{\Omega}_{c}$ in $\{ (x, \xi, \lambda, \mu, \rho) |\dot{\bar{V}} = 0 \}$, which implies that $\dot{\xi}_i = \mathbf{0}$, $x_i = r_{ij}^{x}$. Moreover, $\nu_i = \mathbf{0}$ implies that $x_i$ remains unchanged.

However, these results derived from \eqref{time derivative of bar_V} are insufficient to conclude the optimality yet. To this end, let us go back and rearrange the time derivative of $\dot{\bar{V}}$.

\textbf{Step 2}:
Reformulating the term
$\sum_{j \in \mathcal{N}_i}\begin{bmatrix}
\bar{x}_i &
\bar{\xi}_i
\end{bmatrix}
p_{ij}$ from \eqref{time derivative of S_i}, we have
\[
    \begin{array}{rl}
& \sum_{j \in \mathcal{N}_i}\begin{bmatrix}
\bar{x}_i & \bar{\xi}_i
\end{bmatrix}
p_{ij}\\
 = &
\sum_{j \in \mathcal{N}_i}
\left(
\begin{bmatrix}
\bar{r}_{ij}^{x} & 
\bar{r}_{ij}^{\xi}
\end{bmatrix}
- 
\begin{bmatrix}
\bar{r}_{ij}^{x} - \bar{x}_i &
\bar{r}_{ij}^{\xi} - \bar{\xi}_i
\end{bmatrix}
\right)
p_{ij} \\
= & \sum_{j \in \mathcal{N}_{i}}\bar{r}_{ij}^T p_{ij}
- \sum_{j\in \mathcal{N}_{i}}
\begin{bmatrix}
r_{ij}^{x} - x_i\\
r_{ij}^{\xi} -\xi_i
\end{bmatrix}^T
E_{ij}
\begin{bmatrix}
r_{ij}^{x} - x_i\\
r_{ij}^{\xi} -\xi_i
\end{bmatrix}\\
& -
\sum_{j \in \mathcal{N}_i}
\begin{bmatrix}
\mathbf{0}\\
\xi_i^* - \xi^*_{j}
\end{bmatrix}^T
E_{ij}
\begin{bmatrix}
r_{ij}^{x} - x_i\\
r_{ij}^{\xi} -\xi_i
\end{bmatrix}\\
= &
\sum_{j \in \mathcal{N}_{i}}\bar{r}_{ij}^T p_{ij}
- \sum_{j\in \mathcal{N}_{i}} a_{ij} \left( \xi_i^* - \xi_j^* \right)^T \left(r_{ij}^{x} - x_i\right)\\
& - 
\sum_{j\in \mathcal{N}_{i}} a_{ij} \left\| x_i - r_{ij}^{x} \right\|^2.
\end{array}
\]
When the states are in $\bar{\Omega}_{c}$, it is already shown that $x_i = r_{ij}^{x} = \text{constant}$. Thus, following the time derivative of $\bar{V}$ along with \Cref{lemma passivity of scattering transformation 2}, we have
\begin{align*}
\dot{\bar{V}} = & \sum_{i \in \mathcal{N}} \left\{ - \sum_{k = 2}{m} {\rho_k^i}^T\rho_k^i + \left( \mu_i - \mu_i^* \right) h_i(x_i) \right.\\
& + (\lambda_i^2 - {\lambda_i^*}^2) g_i(x_i) - \bar{x}_i^T \left[\nabla f_i(x_i) + \lambda_i^2 \nabla g_i(x_i) \right]\\
&\left. - \sum_{j\in \mathcal{N}_{i}} a_{ij} \left\| x_i - r_{ij}^{x} \right\|^2 \right\}\\
= & \sum_{i \in \mathcal{N}} \left\{ - \bar{x}_i^T \left[ \nabla f_i(x_i) + \lambda_i^2 \nabla g_i(x_i) \right] + (\lambda_i^2 - {\lambda_i^*}^2) g_i(x_i) \right.\\
& \left. + (\mu_i - \mu_i^*) h_i(x_i) \vphantom{{\lambda_i^*}^2} \right\}\\
= & - \bar{x}^T\nabla_{x} \mathcal{L}_{g} + \left(\lambda^2 - {\lambda^*}^2 \right)^T \nabla_{\lambda^2} \mathcal{L}_{g} + \bar{\mu}^T \nabla_{\mu}\mathcal{L}_{g}\\
\leq & \mathcal{L}_{g}(x^*, \lambda, \mu) - \mathcal{L}_{g}(x,\lambda,\mu) + \mathcal{L}_{g}(x, \lambda, \mu) - \mathcal{L}_{g}(x,\lambda^*, \mu^*)\\
= & \mathcal{L}_{g}(x^*, \lambda, \mu) - \mathcal{L}_{g}(x^*,\lambda^*, \mu^*)\\
& + \mathcal{L}_{g}(x^*, \lambda^*, \mu^*) - \mathcal{L}_{g}(x,\lambda^*, \mu^*)
\end{align*}
where $\lambda^2 := \lambda \circ \lambda$ and ${\lambda^*}^2$ is defined similarly, $\mathcal{L}_{g}(x, \lambda, \mu) := f(x) + \sum_{i \in \mathcal{N}} \left[\lambda_i^2 g_i(x_i) + \mu_i h_i(x_i) \right]$ is a Lagrangian, the third equality follows from the fact that $\mathcal{L}_{g}$ is convex with respect to $x$ and concave with respect to $\left( \lambda^2, \mu \right)$. Then $\left(x^*, {\lambda^*}^2, \mu^* \right)$ can be seen as a saddle point to $\mathcal{L}_{g}$. It satisfies that $\mathcal{L}_{g}(x^*, \lambda, \mu) \leq \mathcal{L}_{g}(x^*,\lambda^*, \mu^*) \leq \mathcal{L}_{g}(x,\lambda^*, \mu^*)$. These equalities hold when $\dot{\bar{V}} = 0$, i.e.,
\begin{subequations}
\begin{align}
& \sum_{i\in\mathcal{N}} \left(\lambda_i^2 - {\lambda_i^*}^2\right)g_i(z^*) + \left(\mu_i - \mu_i^* \right) h_i(z^*)= 0\\
& \sum_{i \in \mathcal{N}} f_i (z^*) = \sum_{i \in \mathcal{N}} \left\{ f_i(x_i) + {\lambda_i^*}^2 g_i(x_i) + \mu_i^* h_i(x_i) \right\}.\numberthis \label{optimal value comparing}
\end{align}
\end{subequations}
Here, since $x_i$ is unchanged, it is clear from \eqref{eq: dynamics of lambda and mu} that $h_i(x_i) = 0$ otherwise $\mu_i$ is unbounded, $\forall i \in \mathcal{N}$, then $\sum_{i \in \mathcal{N}} \mu_i^* h_i(x_i) = 0$.
Therefore, if
$
{\lambda_i^*}^2 g_i(x_i) = 0,~\forall i \in \mathcal{N},
$
then we can conclude from \eqref{optimal value comparing} that $x_i$ is the optimal solution. If $\lambda^* \neq 0$, then $\lambda_i > 0$, $\forall t > 0$. This is because if $\lambda_i$ goes to zero, then $\bar{V} \rightarrow + \infty$ due to the term $-\frac{1}{2} {\lambda_i^*}^2 \ln{\lambda_i}$ in $S_{i}^{g}$, which contradicts the fact that $\bar{V}$ is decreasing.
We will reason by cases in the following.
\begin{enumerate}
    \item If $g_i(z^*) < 0$, then $\lambda_i^* = 0$, ${\lambda_i^*}^2 g_i(x_i) = 0$ holds.
    \item If $g_i(z^*) = 0$, then $\lambda_i^*$ can be nonzero. Note that $x_i$ is a constant. Then $g_i(x_i) \leq 0$ or else $\lambda_i$ will be unbounded according to the dynamics $\dot{\lambda}_i = 2 \lambda g_i(x_i)$.
    \begin{enumerate}
        \item If $g_i(x_i) = 0$, ${\lambda_i^*}^2 g_i(x_i) = 0$ holds.
        \item If $g_i(x_i) < 0$, then $\lambda_i$ should be zero to render a stable equilibrium point, which contradicts the fact that $\lambda_i$ will not approach $0$, $\forall t > 0$.
    \end{enumerate}
\end{enumerate}
Therefore, ${\lambda_i^*}^2 g_i(x_i) = 0$ holds, and $\sum_{i\in\mathcal{N}} f_i(x_i)$ is the optimal value, which means that $x_i = x_j,~\forall i, ~ j$ is an optimal solution.
\end{proof}

\begin{remark}
The LaSalle's invariance principle plays a crucial role, which allows the analysis in step 2 of the proof. Such an application of the LaSalle's invariance principle under delays is made valid thanks to the algorithmic dynamics \eqref{eq: distributed algorithm without phase compensation} for the generalized Lagrangian that preserves smoothness and passivity.
It should also be noted that the passivity-based phase lead compensation technique eliminates possible oscillations and ensures the convergence with cost functions not necessarily strictly convex.
\end{remark}

\section{Application to Target Matching Problem}
Let us consider an environmental-monitoring problem that is formulated as a target matching problem \cite{miyano2019design}.
\begin{equation}\label{matching problem original}
\begin{aligned}
&\min_{z_{lk} \geq 0} \sum_{l = 1}^{N} \sum_{k = 1}^{M} z_{lk} \left\|w_l - q_k \right\|\\
&\text{s.t.}~\sum_{l = 1}^{N} z_{lk} = 1, ~ k = 1, \ldots, M,~ \sum_{k = 1}^{M} z_{lk} = 1, ~ l = 1, \ldots, N
\end{aligned}
\end{equation}
where $z = \textrm{col}(z_1, \ldots, z_N) \in \mathbb{R}^{NM}$ is the decision variable with $z_l = \textrm{col}(z_{l1}, \ldots, z_{lM})\in \mathbb{R}^{M}$, and $N,~M$ are the number of Robots and Targets, respectively. The variable $z_{lk} \in \mathbb{R}$ denotes the matching label for Robot $l$ and Target $k$. The term $\left\|w_l - q_k \right\|$ denotes the distance from Robot $l$ to Target $k$, which is regarded as a constant (given by sensing). Note that the linear programming problem \eqref{matching problem original} with continuous variables is a strictly relaxation from integer programming. The optimal solution for $z_{lk}$ is either $1$ or $0$, which implies the matching status between Robot $l$ and Target $k$ \cite{miyano2019design}.
We reformulate \eqref{matching problem original} as a consensus-based distributed optimization problem and use notation $x_{i(lk)}$ to denote the estimation of $z_{lk}$ from agent $i$.

Consider an area of $100 \times 100 [\text{m}^2]$ with $N = 5$ Robots and $M = 5$ Targets. The positions of Robots and Targets are shown in \Cref{fig: Robots and Targets positions}.

\begin{figure}[b]
\centering
\subfigure[Robots and Targets positions.]{
\includegraphics[width = .23\textwidth]{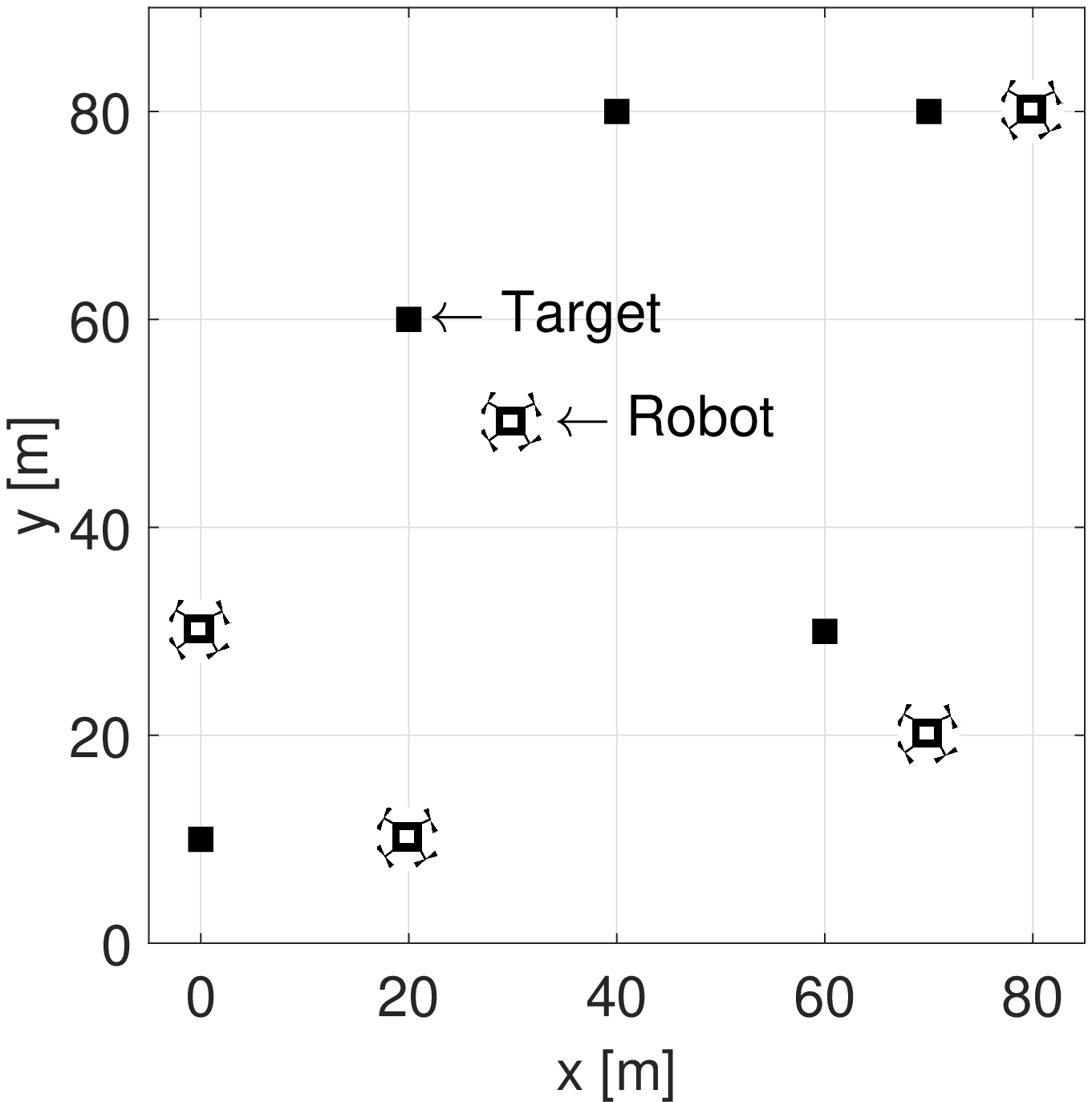}\label{fig: Robots and Targets positions}}
\subfigure[Matching results.]{
\includegraphics[width = .23\textwidth]{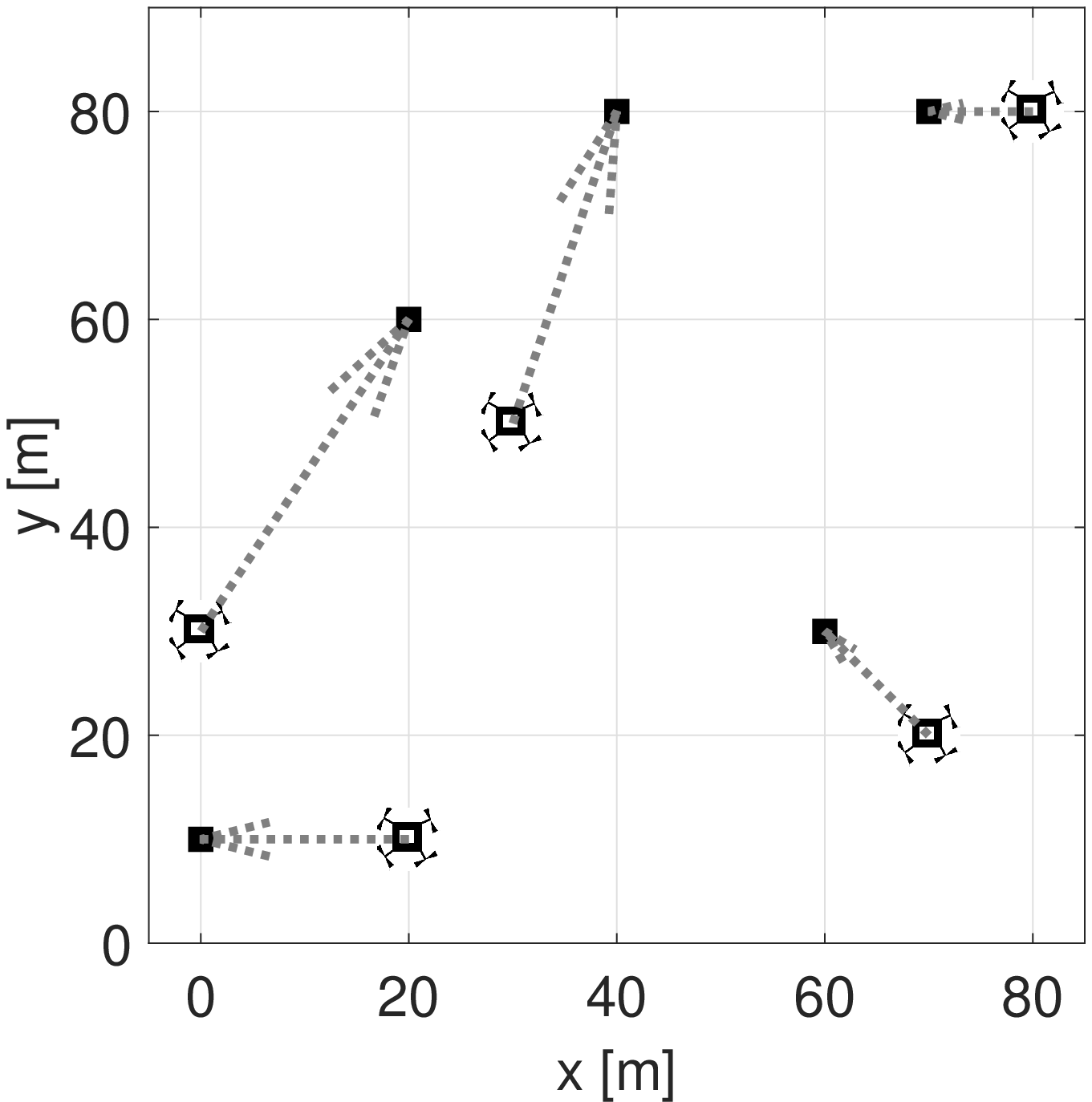}
\label{fig: Matching Results Robots and Targets}}
\caption{Robots and Targets in the $100 \times 100[\text{m}^2]$ area.}
\end{figure}

The communication graph is set to be a ring graph with $a_{ij} = 4$, $\forall (i,j) \in \mathcal{E}$, the other parameters are defined as $m = 2$, $b_2^i = 5$, $c_1^i = 1$, $c_2^i = 10$, $\eta = 1$, the initial condition is $(x_i,\xi_i, \lambda_i, \mu_i, \rho^i)|_{t = 0} = (\mathbf{0}, \mathbf{0}, 0.01, 0, \mathbf{0})$ and the stepsize is set to $0.001$ in Simulink. The trajectories of of $x_i$, $i = 1,\ldots,5$ in algorithm \eqref{eq: overall distributed algorithm} without communication delay converge to the optimal solution, as shown in \Cref{fig: trajectories no delay}, which validates \Cref{Theorem 1}.

Next, we assume that there exist unknown and heterogeneous constant delays between any two neighboring agents. The communication delays can bring in instability for algorithm \eqref{eq: overall distributed algorithm} (see, e.g., \cite{hatanaka2018passivity}).
Thus, let us adopt algorithm \eqref{eq: overall distributed algorithm under delays} with scattering transformation \eqref{scattering variables} to enhance robustness against delays, and consider heterogeneous communication delays $T_{ij} \in \left[0.2, 0.3\right][\text{s}]$, $\forall (i,j) \in \mathcal{E}$.
The trajectories of $x_i$, $i = 1,\ldots,5$ converge to the optimal solution as shown in \Cref{fig: trajectories}, validating \Cref{Theorem 2}.
\begin{figure}
\centering
\subfigure[The algorithm \eqref{eq: overall distributed algorithm} without communication delay.]{
\includegraphics[width = .48\linewidth]{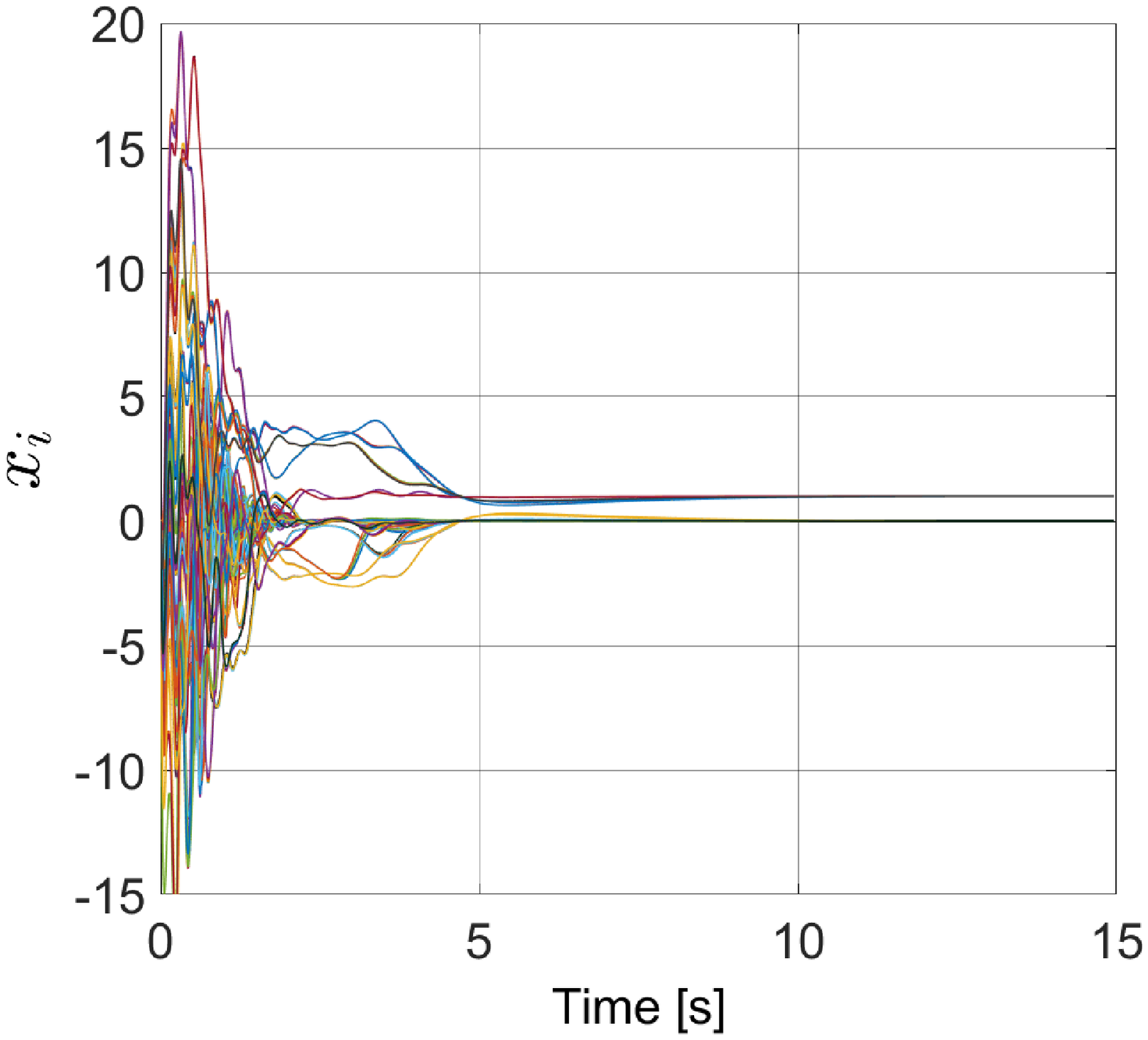}\label{fig: trajectories no delay}}
\subfigure[The algorithm \eqref{eq: overall distributed algorithm under delays} with scattering transformation under heterogeneous communication delays.]{\includegraphics[width = .48\linewidth]{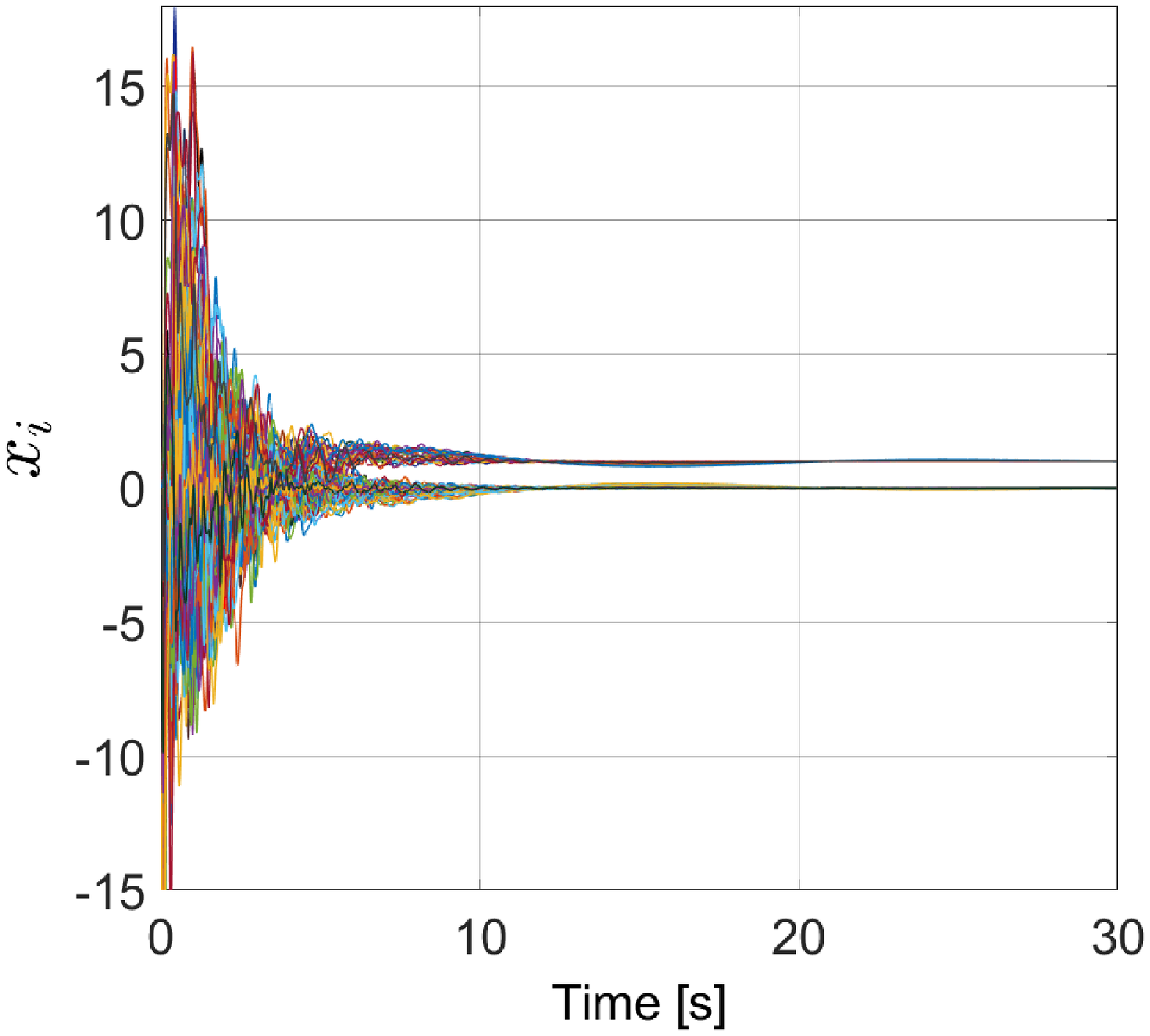}\label{fig: trajectories}}
\caption{The trajectories of $x_i$, $i = 1,\ldots,5$.}
\end{figure}
Note that the agents' states diverge for the present delays in the absence of the scattering transformation.
Therefore, the scattering transformation serves as a key technique for delay robustification.
The corresponding results are shown in \Cref{fig: Matching Results Robots and Targets}, illustrating the matching between Robots and Targets.


\section{Conclusion}
In this work, we have addressed the distributed constrained optimization problem under inter-agent communication delays from the perspective of passivity.
A smooth continuous-time algorithm for distributed constrained optimization with general convex objective functions has been proposed. The scattering transformation has been incorporated into the proposed algorithm to enhance the robustness against unknown and heterogeneous communication delays.

\bibliographystyle{IEEEtran}
\bibliography{References}
\end{document}